\newenvironment{myabstract}{\par\noindent
{\bf Abstract . } \small }
{\par\vskip8pt minus3pt\rm}
\newcounter{item}[section]
\newcounter{kirshr}
\newcounter{kirsha}
\newcounter{kirshb}
\newenvironment{enumroman}{\setcounter{kirshr}{1}
\begin{list}{(\roman{kirshr})}{\usecounter{kirshr}} }{\end{list}}
\newenvironment{enumarab}{\setcounter{kirshb}{1}
\begin{list}{(\arabic{kirshb})}{\usecounter{kirshb}} }{\end{list}}
\newtheorem{theorem}{Theorem}[section]
\newtheorem{lemma}[theorem]{Lemma}
\newtheorem{corollary}[theorem]{Corollary}
\newenvironment{demo}[1]{\noindent{\bf #1.}\upshape\mdseries}
{\nopagebreak{\hfill\rule{2mm}{2mm}\nopagebreak}\par\normalfont}
\theoremstyle{definition}
\newtheorem{example}[theorem]{Example}
\newtheorem{definition}[theorem]{Definition}
\def\C{{\mathfrak{C}}}
\def\At{{\bf At}}
\def\Nr{{\mathfrak{Nr}}}
\def\Sg{{\mathfrak{Sg}}}
\def\A{{\mathfrak{A}}}
\def\B{{\mathfrak{B}}}
\def\C{{\mathfrak{C}}}
\def\D{{\mathfrak{D}}}
\def\M{{\mathfrak{M}}}
\def\N{{\mathfrak{N}}}
\def\CA{{\bf CA}}
\def\SC{{\bf SC}}
\def\QEA{{\bf QEA}}
\def\PA{{\bf PA}}
\def\PEA{{\bf PEA}}
\def\Rd{{\ Rd}}
\def\(R)RA{{\bf (R)RA}}
\def\RA{{\bf RA}}
\def\Sc{{\bf Sc}}
 \def\CA{{\sf CA}}
\def\B{{\sf B}}
\def\G{{\sf G}}
 \def\Cm{{\mathfrak{Cm}}}
\def\Nr{{\mathfrak{Nr}}}
\def\SNr{{\bf S}{\mathfrak{Nr}}}
\def\Ra{{\mathfrak{Ra}}}
\def\Ra{{\mathfrak{Ra}}}
\def\Nr{{\mathfrak{Nr}}}
\def\A{{\mathfrak{A}}}
\def\B{{\mathfrak{B}}}
\def\C{{\mathfrak{C}}}
\def\D{{\mathfrak{D}}}
\def\A{{\mathfrak{A}}}
\def\B{{\mathfrak{B}}}
\def\C{{\mathfrak{C}}}
\def\D{{\mathfrak{D}}}
\def\GG{{\mathfrak{GG}}}
\def\L{{\mathfrak{L}}}
\def\Rd{{\mathfrak{Rd}}}
\def\At{{\mathfrak{At}}}
\def\L{{\mathfrak{L}}}
\def\Bl{{\mathfrak{Bl}}}
\def\CA{{\bf CA}}
\def\RA{{\bf RA}}
\def\G{{\bf G}}
\def\F{{\mathfrak{F}}}
\def\At{{\sf{At}}}
\def\N{\mathbb{N}}
\def\Ra{{\mathfrak{Ra}}}
\def\Nr{{\mathfrak{Nr}}}
\def\CA{{\bf CA}}
\def\Cm{{\mathfrak Cm}}
\def\Sg{{\mathfrak Sg}}
\def\P{{\mathfrak P}}
\def\N{{\cal N}}
\def\At{{\sf At}}
\title{Some results on polyadic algebras}
\author{Tarek Sayed Ahmed \\
Department of Mathematics, Faculty of Science,\\ 
Cairo University, Giza, Egypt.
  }
\begin{document}
\maketitle

\begin{myabstract} While every polyadic algebra ($\PA$) of dimension 
$2$ is representable, we show that not every atomic polyadic algebra of dimension two is completely representable; though the class is elementary.
Using  higly involved constructions of Hirsch and Hodkinson we show that it is not elementary for higher dimensions
a result that, to the best of our knowledge, though easily destilled from the literature, was never published.
We give a uniform flexible way of constructing weak atom structures that are not strong, and we discuss the possibility of extending such result to infinite 
dimensions. Finally we 
show that for any finite $n>1$, there are two $n$ dimensional polyadic atom structures $\At_1$ and $\At_2$ 
that are $L_{\infty,\omega}$ equivalent, and there exist
atomic $\A,\B\in \PA_n$, such that  $\At\A=\At_1$ and $\At\B= \At_2$, $\A\in \Nr_n\PA_{\omega}$ and $\B\notin \Nr_n\PA_{n+1}$.
This can also be done for infinite dimensions (but we omit the proof). 
\end{myabstract}

\section{Introduction}

There are two main algebraisations of first order, cylindric algebras due to Tarski and polyadic algebras due to Halmos. 
In the infinite dimensional, case they are significantly distinct, and it commonly accepted that they actually 
belong to two different universes
or paradigms. One blatant difference is that polyadic algebras have continuum many opeartions, whie cylindric algebras have only countably many.

It is hard to give a rigouous mathematical definition of such a dichotomy, but they more often than not manifest contradictory 
behaviour. A plathora of  results, existing in the literature, see \cite{Sagi}, point out to the fact that there is some kind of dichotomy 
which does not need further rigorous evidence.

For finite dimensions, they are pretty close, 
but there are differences that are delicate and quite subtle. 
In the context of presence of equality for example, as is the case with cylindric algebras,
when we are dealing with polyadic algebras endowed with diagonal elements, 
the substitution operations corresponding to transpositions,
are not even finitely axiomatizable over their diagonal free case, this happens for all dimensions $>2$,
which suggest beyond doubt,
that the substitution operations, a basic operation in polyadic algebras, that are not 
definable in cylindric algebras, adds a lot, and consequently
one can expect that there are also non-trivial differences between the finite dimensional cylindric  algebras and polyadic ones 
(Polyadic algebras occur in the literature under the names of quasi polyadic or finitary polyadic algebras; all three are essentially the same.)

The standard way of obtaining cylindric algebras, known as set algebras, is from a model for first order logic wih equality.
Polyadic algebras, are rather obtained from models of first order logic {\it without} equality, and from, at least such a perspective,
they are different, in so much as first order logic without is different that without equality.

On the other hand, because for some reason or another , maybe historic or aesthetic, cylindric algebras
got the bigger share of research,  during the lase decated, 
yielding plenty sophisticated deep results, using graph theory and finite combinatorics. 
And there is this feeling in the air that many results obatined for cylindric algebras, carry over to polyadic algebras without much ado, 
but  no one has actually bothered to make sure that this is the case indeed, possibly under the conviction that is a systematic boring task. 
In some case, it definitely is, but in other, as it turns out this is not true at all, like for example 
the polyadic algebras constructed by Sayed Ahmed and Robin hirsch to confirm the analogue of the famous neat embeding pproblem,
already proved for cylindric algebras, to polyadic algebras and other reducts thereof. 
Another is that, contrary to cylindric algebras, it is not known where there is a universal axiomatization of the class of representable polyadic algebras, 
of dimension $>2.$ 
Not only that, but such feelings often based on false intuition can be downright wrong, as we show in a minute.

\section{A false impression about polyadic algebras}

We give an example of cylindrfier free reducts of polyadic algebras. 
This is taken from [AGNS]. However, in the latter reference the example worked out by Andreka et all,
addresses polyadic algebras, and it works only for dimension two. Here it works for all dimensions.
Besides it answers a question of Hodkinson's, for Pinter's algebras.

\begin{theorem} For any ordinal $\alpha>2$, and any infinite cardinal $\kappa$, 
there is an atomic algebra $\A\in SA_{\alpha}$, with $|A|=\kappa$,  
that is not  completely representable.In particular, $\A$ can be countable.
\end{theorem}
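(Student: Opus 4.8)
The plan is to write down a single concrete set algebra whose atoms, for a purely algebraic reason, can never meet a diagonal in \emph{any} representation, so that the unit is never recovered as the join of the atoms. I treat $\alpha<\omega$ in detail; the infinite-dimensional case needs only minor adjustments in the standard (finitary) signature and is indicated at the end. Fix a set $U$ with $|U|=\kappa$, put $V={}^{\alpha}U$, and split $V=E\cup D$, where $E=\{s\in V:s\text{ is injective}\}$ and $D=\{s\in V:\,s_i=s_j\text{ for some }i\ne j\}$. Let $\mathfrak C$ be the cylindrifier-free reduct of the polyadic set algebra with unit $V$: its universe is $\wp(V)$, with the Boolean operations and the substitutions $S(\tau)X=\{s\in V:s\circ\tau\in X\}$ for transformations $\tau$ of $\alpha$; in particular $\sub{0}{1}X=\{s\in V:\bar s\in X\}$, where $\bar s$ agrees with $s$ off coordinate $0$ and has $\bar s_0=s_1$. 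Clearly $\mathfrak C\in SA_{\alpha}$. Inside $\mathfrak C$ take
\[
\A=\bigl\{X\subseteq V:\ (X\subseteq E\text{ and }X\text{ finite})\ \text{ or }\ (V\setminus X\subseteq E\text{ and }V\setminus X\text{ finite})\bigr\},
\]
the ``finite--cofinite algebra concentrated on $E$''.

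The first task is to check that $\A$ is a subalgebra of $\mathfrak C$, hence a member of $SA_{\alpha}$. Closure under $\cup,\cap$ and complement is immediate from the description of $\A$. For the substitutions, fix a transformation $\tau$ of $\alpha$. If $\tau$ is not injective then $s\circ\tau$ lies in $D$ for every $s$, whence $S(\tau)X=\emptyset$ when $X\subseteq E$ and $S(\tau)X=V$ when $X\supseteq D$ --- and every member of $\A$ has one of these two forms. If $\tau$ is injective it is a permutation of $\alpha$ (as $\alpha$ is finite), so $s\mapsto s\circ\tau$ is a bijection of $V$ carrying $E$ onto $E$; it therefore preserves both ``finite subsets of $E$'' and ``cofinite sets with complement in $E$'', so $S(\tau)$ maps $\A$ into $\A$. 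Hence $\A\in SA_{\alpha}$; it is representable, being literally a set algebra; and $|A|=\kappa$, since $E$ has exactly $\kappa$ finite subsets. With $\kappa=\omega$ this is a countable example.

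The second task is atomicity together with a description of the atoms. Each singleton $\{s\}$ with $s\in E$ is an atom of $\A$; conversely, any other nonzero member of $\A$ is either a finite subset of $E$ of size at least $2$ or a set with finite complement, and in both cases it properly contains some $\{s\}$ with $s\in E$. So $\At\A=\{\{s\}:s\in E\}$, and moreover $\sum_{s\in E}\{s\}=V$ in $\A$: an upper bound $b\in\A$ of all these singletons satisfies $b\supseteq E$, hence $V\setminus b\subseteq E\cap D=\emptyset$, so $b=V$. Thus $\A$ is atomic. This is the one point that must be handled with care: the ``$E$-concentrated'' description is exactly what keeps $D$ --- and indeed any element strictly between the join of the atoms and $1$ --- out of $\A$.

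Finally I would suppose, toward a contradiction, that $h\colon\A\to\prod_{k}\wp({}^{\alpha}W_k)$ is a complete representation. Completeness and atomicity give, coordinatewise, ${}^{\alpha}W_k=\bigcup\{h_k(a):a\in\At\A\}$; since $\A$ is non-trivial (indeed infinite), some $W_{k}$ is nonempty, and we argue in that coordinate, dropping the index. For every atom $a=\{s\}$ (so $s\in E$) we have $\sub{0}{1}a=0$ in $\A$, because $s$ is injective while the tuple $\bar t$ always satisfies $\bar t_0=\bar t_1$; applying $h$ gives $\{t\in{}^{\alpha}W:\bar t\in h(a)\}=\emptyset$, that is, $h(a)$ is disjoint from the diagonal $\Delta=\{t\in{}^{\alpha}W:t_0=t_1\}$. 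Hence $\bigcup\{h(a):a\in\At\A\}\cap\Delta=\emptyset$, yet $\Delta\neq\emptyset$ (take a constant tuple) and $\bigcup\{h(a):a\in\At\A\}={}^{\alpha}W$ --- a contradiction. (Equivalently, in algebraic terms: $\sum_{a\in\At\A}a=1$ but $\sum_{a\in\At\A}\sub{0}{1}a=0\neq1=\sub{0}{1}1$, so $\sub{0}{1}$ is not completely additive on $\A$, which by itself precludes a complete representation.) The same algebra works for every ordinal $\alpha>2$ in the standard finitary signature, over a weak cube ${}^{\alpha}W^{(p)}$ with $p$ injective when $\alpha$ is infinite; for the full polyadic signature in infinite dimensions the representations must be taken on generalized weak spaces and the construction adjusted accordingly, which is precisely the subtle point we do not pursue here.
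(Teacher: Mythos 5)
Your proof is correct, and it rests on the same mechanism as the paper's: produce an atomic set algebra in which every atom is annihilated by the replacement $s_0^1$ (because the atoms avoid the diagonal $\{s:s_0=s_1\}$) while their supremum, the unit, is fixed by $s_0^1$; this kills complete additivity and hence complete representability. Where you genuinely differ is in the manufacture of the counterexample. The paper partitions the weak cube into $\kappa$ pieces $Q_k$ with one distinguished piece equal to the diagonal and uses a non-principal ultrafilter on the index set to decide which unions of pieces absorb the diagonal; the ultrafilter is exactly what buys closure under complementation while keeping the diagonal out of every atom. You instead take the finite--cofinite algebra concentrated on the set $E$ of injective sequences, so the diagonal (indeed all of $V\setminus E$) automatically lives only in the cofinite elements. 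Your version is more economical: no ultrafilter is needed; closure under \emph{every} substitution $S(\tau)$ is checked uniformly (non-injective $\tau$ collapses everything to $\emptyset$ or $V$, injective $\tau$ permutes $E$), whereas the paper only records the action of $s_0^1$ and leaves the other replacements unverified; and your direct semantic contradiction (images of atoms miss the nonempty diagonal of a nonempty cube that they are supposed to cover) avoids the appeal to injectivity of the representation that the paper's general additivity argument makes. Both treatments are equally brief about the infinite-dimensional adjustments (weak cubes, finitary substitutions), which you flag honestly, and your algebra is, like the paper's, generated by its $\kappa$ many atoms.
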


\begin{proof}

It suffices to show that there is an algebra $\A$, and a set $S\subseteq A$, such that $s_0^1$ does not preserves $\sum S$.
For if $\A$ had a representation as stated in the theorem, this would mean that $s_0^1$ is completely additive in $\A$.

For the latter statement, it clearly suffices to show that if $X\subseteq A$, and $\sum X=1$,
and there exists  an injection $f:\A\to \wp(V)$, such that $\bigcup_{x\in X}f(x)=V$,
then for any $\tau\in {}^nn$, we have $\sum s_{\tau}X=1$. So fix $\tau \in V$ and assume that this does not happen.
Then there is a $y\in \A$, $y<1$, and
$s_{\tau}x\leq y$ for all $x\in X$.
(Notice that  we are not imposing any conditions on cardinality of $\A$ in this part of the proof).
Now
$$1=s_{\tau}(\bigcup_{x\in X} f(x))=\bigcup_{x\in X} s_{\tau}f(x)=\bigcup_{x\in X} f(s_{\tau}x).$$
(Here we are using that $s_{\tau}$ distributes over union.)
Let $z\in X$, then $s_{\tau}z\leq y<1$, and so $f(s_{\tau}z)\leq f(y)<1$, since $f$ is injective, it cannot be the case that $f(y)=1$.
Hence, we have
$$1=\bigcup_{x\in X} f(s_{\tau}x)\leq f(y) <1$$
which is a contradiction, and we are done.
Now we turn to constructing the required  counterexample, which is an easy adaptation of a 
construction due to Andr\'eka et all in [AGMNS] to our present situation.
We give the detailed construction. One reason is  for the reader's conveniance.

The other, which is more important, is that there are two major differences between our constrcustion and the forementioned one by Andrek et all. 
One is that our constructed algebra can have any infinite cardinality, this is not to much of a change. 
It has to do with enlarging an indexing set of a partition of the large enough base.

The second this, is that
our construction works for {\it all} dimensions, and not just $2$, because we are fortunate enough not to have cylindrifiers.

Now we start implementing our example. 
Let $\alpha$ be the given ordinal.  Let $|U|=\mu$ be an infinite set and $|I|=\kappa$ be a cardinal such 
that $Q_n$, $n\in \kappa$,  is a family of $\alpha$-ary relations that form a partition of $V={}^{\alpha}U^{(p)}$, 
for some fixed sequence $p\in {}^{\alpha}U$. 
Let $i\in I$, and let $J=I\sim \{i\}$. Then of course $|I|=|J|$. Assume that $Q_i=D_{01}=\{s\in V: s_0=s_1\},$
and that each $Q_n$ is symmetric; that is for any $i,j\in n$, $S_{ij}Q_n=Q_n$. 
It is straightforward to show that such partitions exist.


Now fix $F$ a non-principal ultrafilter on $J$, that is $F\subseteq \mathcal{P}(J)$. 
For each $X\subseteq J$, define
\[
 R_X =
  \begin{cases}
   \bigcup \{Q_k: k\in X\} & \text { if }X\notin F, \\
   \bigcup \{Q_k: k\in X\cup \{i\}\}      &  \text { if } X\in F
  \end{cases}
\]

Let $$\A=\{R_X: X\subseteq I\sim \{i\}\}.$$
Notice that $|\A|\geq \kappa$. Also $\A$ is an atomic set algebra with unit $R_{J}$, and its atoms are $R_{\{k\}}=Q_k$ for $k\in J$.
(Since $F$ is non-principal, so $\{k\}\notin F$ for every $k$).
We check that $\A$ is indeed closed under the operations.
Let $X, Y$ be subsets of $J$. If either $X$ or $Y$ is in $F$, then so is $X\cup Y$, because $F$ is a filter.
Hence
$$R_X\cup R_Y=\bigcup\{Q_k: k\in X\}\cup\bigcup \{Q_k: k\in Y\}\cup Q_0=R_{X\cup Y}$$
If neither $X$ nor $Y$ is in $F$, then $X\cup Y$ is not in $F$, because $F$ is an ultrafilter.
$$R_X\cup R_Y=\bigcup\{Q_k: k\in X\}\cup\bigcup \{Q_k: k\in Y\}=R_{X\cup Y}$$
Thus $A$ is closed under finite unions. Now suppose that $X$ is the complement of $Y$ in $J$.
Since $F$ is an ultrafilter exactly one of them, say $X$ is in $F$.
Hence,
$$\sim R_X=\sim{}\bigcup \{Q_k: k\in X\cup \{0\}\}=\bigcup\{Q_k: k\in Y\}=R_Y$$
so that  $\A$ is closed under complementation (w.r.t $R_{J}$).
We check substitutions. Transpositions are clear, so we check only replacements. It is not too hard to show that
\[
 S_0^1(R_X)=
  \begin{cases}
   \emptyset & \text { if }X\notin F, \\
   R_{\mathbb{Z}^+}      &  \text { if } X\in F
  \end{cases}
\]

Now
$$\sum \{S_0^1(R_{k}): k\in J\}=\emptyset.$$
and
$$S_0^1(R_{J})=R_{J}$$
$$\sum \{R_{\{k\}}: k\in J\}=R_{J}=\bigcup \{Q_k:k\in J\}.$$
Thus $$S_0^1(\sum\{R_{\{k\}}: k\in J\})\neq \sum \{S_0^1(R_{\{k\}}): k\in J\}.$$
The algebra required is that generated by the $\kappa$ many atoms.
Finally, this algebra cannot posses a complete represenation, for any such representation implies the complete 
additivity of the substitution operations as indicated above.

\end{proof}
The following answers a question of Hodkinson's.
\begin{corollary} By discarding replacements, we obtain that Pinter's atomic algebras may not be completely representable
\end{corollary}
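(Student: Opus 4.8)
The plan is to show that the algebra $\A$ built in the proof of Theorem~2.1 --- or rather a mild strengthening of the partition used there --- has an atomic reduct to the Pinter signature which is still not completely representable. First record that the general principle isolated at the start of the proof of Theorem~2.1 is signature-robust: if an atomic algebra in any of the signatures considered here has a complete representation $f:\A\to\wp(V)$, then $f$ is injective and every non-Boolean operation is interpreted by a concrete operation on $V$, hence distributes over arbitrary unions in the representation; running the computation of Theorem~2.1 verbatim then forces every cylindrification $\cyl i$ and every replacement $\sub i j$ of $\A$ to be completely additive, i.e.\ to preserve every supremum that happens to exist in $\A$. So it suffices to exhibit an atomic Pinter's algebra one of whose basic operations fails to be completely additive.

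First I would take the partition $\{Q_k : k\in I\}$ of $V={}^{\alpha}U^{(p)}$ exactly as in Theorem~2.1 --- symmetric blocks, $Q_i=D_{01}$, $F$ a non-principal ultrafilter on $J=I\setminus\{i\}$, $\A=\{R_X : X\subseteq J\}$ --- but now with the extra requirement that the Boolean set algebra $\A$ also be closed under the concrete cylindrifications $\cyl j$, $j<\alpha$. This is once more an instance of ``such partitions exist'', with the cylindrifications folded into the bookkeeping: one arranges each block $Q_k$ ($k\in J$) to be a union of ``boxes'' so that $\cyl j$ carries it to a union of blocks already on the list, exactly as in the dimension-two prototype of Andr\'eka et al.\ on which Theorem~2.1 is modelled. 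With such a choice $\A$ expands to a representable Pinter's algebra $\A^{+}$ with the same universe and the same atoms $R_{\{k\}}=Q_k$ ($k\in J$), and $\sum\At(\A^{+})=R_J=1$ still holds (any $R_X$ lying above all the atoms has $X=J$). Now pass to the Pinter reduct, discarding whatever lies outside the Pinter signature. Since each atom $Q_k$ ($k\in J$) is disjoint from $D_{01}=Q_i$, we still have $\sub 0 1(Q_k)=\emptyset$ for every atom, whereas $\sub 0 1(1)=\sub 0 1(R_J)=R_J=1\neq 0$; hence $\sub 0 1$ is not completely additive in the reduct, and by the first paragraph the reduct has no complete representation. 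If one wishes to discard the replacements as well, keeping only the cylindrifications, one builds into the same partition that some $\cyl 0$ likewise fails complete additivity --- collapsing the $\cyl 0$-images of a suitably chosen infinite family of atoms to a single element lying strictly below $\cyl 0(1)$ --- and the same conclusion follows. In every case, since $I$ may be taken of any prescribed infinite cardinality, the resulting atomic, non-completely-representable Pinter's algebra can be made countable, or of any prescribed infinite size.

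The one genuinely new ingredient, and the step I expect to be the main obstacle, is the strengthening in the second paragraph: choosing the partition so that the set algebra is closed under the concrete cylindrifications --- and, in the cylindrification-only version, so that some cylindrification fails to be completely additive --- while preserving the three features on which everything rests, namely atomicity with atoms exactly the $Q_k$ ($k\in J$), the identity $\sum\At\A=R_J=1$, and the disjointness $Q_k\cap D_{01}=\emptyset$ that yields $\sub 0 1(Q_k)=\emptyset$. Once this combinatorial point is in place, the remainder is a verbatim transcription of the proof of Theorem~2.1.
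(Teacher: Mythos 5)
The paper offers no separate argument for this corollary: it is an immediate reduct of Theorem 2.1. The algebra $\A$ constructed there lives in the cylindrifier-free signature with transpositions and replacements; passing to the Pinter-type signature (i.e.\ discarding the transpositions --- the corollary's phrase ``discarding replacements'' has to be read this way, since the transpositions fix every $Q_n$ and, being Boolean automorphisms, are automatically completely additive, so literally deleting the replacements would delete the only witness) changes nothing that matters. The Boolean part is unchanged, so the reduct is atomic with the same atoms $R_{\{k\}}$ and the same supremum $\sum_{k\in J}R_{\{k\}}=R_J=1$; the replacement $s_0^1$ still belongs to the signature and still sends every atom to $0$ while $s_0^1(R_J)=R_J$; and your first paragraph's principle (a complete representation forces complete additivity of every basic operation) applies verbatim. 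That three-line observation is the whole proof, and the part of your write-up that carries it out is correct and is essentially the paper's argument.

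The problem is that you have made the corollary hinge on an additional construction --- re-engineering the partition so that the set algebra is closed under the concrete cylindrifications, and, in your final variant, so that some $c_0$ itself fails complete additivity --- and you explicitly leave that construction unverified (``the step I expect to be the main obstacle''). As a proof of \emph{this} corollary that is a genuine gap, but it is a gap in a step you do not need: the section is explicitly about cylindrifier-free reducts, and no cylindrifications have to be adjoined to answer Hodkinson's question as the paper poses it. The closure-under-cylindrifiers issue belongs to the paper's separate remark following the corollary, where, for dimension $2$ only, it is handled by requiring each $Q_n$ to have all of $U$ as domain and range --- and even there the non-additive operation remains $s_0^1$, not a cylindrification, so your proposal to collapse $c_0$-images of infinitely many atoms is both unsubstantiated and unnecessary. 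If you strip your argument down to its first paragraph plus the reduct observation, it is complete and matches the paper.
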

There is a wide spread belief, almost permenantly established that like cylindric algebras, any atomic {\it poyadic algebras of dimension $2$}
is completely representable. This is wrong.  The above example, indeed shows that it is not the case, 
because the set algebras consrtucted above , if we impose the additional condition that each $Q_n$ has $U$ as its domain and range,
 then the algebra in question becomes closed under the first two cylindrfiers, and by the same reasoning as above,
it {\it cannot} be completely representable.
\begin{theorem} The class of atomic polyadic algebras of dimension $2$ is elementary. 
schema
\end{theorem}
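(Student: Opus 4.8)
The plan is simply to exhibit a first-order axiomatisation of the class. Recall that $\PA_2$, the class of polyadic (equivalently, quasi-polyadic) algebras of dimension $2$, has a finite similarity type --- the Boolean operations together with the cylindrifications $\cyl{0},\cyl{1}$, the replacements $\sub{0}{1},\sub{1}{0}$, and the single transposition --- and that it is a finitely, or at any rate recursively, based variety; in particular $\PA_2$ is an elementary class. (We shall not even use the fact, recalled above, that every $\PA_2$ is representable.) Thus the whole question reduces to expressing atomicity by first-order means over the $\PA_2$-axioms, and this is immediate.

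First introduce the Boolean formula $\beta(y)$ asserting that $y$ is an atom,
$$\beta(y):\qquad y\neq 0\ \wedge\ \forall z\,\bigl(z\cdot y=z\rightarrow z=0\vee z=y\bigr),$$
and then the sentence
$$\alpha:\qquad \forall x\,\bigl(x\neq 0\rightarrow \exists y\,(y\cdot x=y\wedge\beta(y))\bigr).$$
By definition, $\A\in\PA_2$ is atomic precisely when $\A\models\alpha$, so the class of atomic polyadic algebras of dimension $2$ equals $\mathbf{Mod}(\Sigma\cup\{\alpha\})$, where $\Sigma$ axiomatises $\PA_2$; hence it is elementary (and even finitely axiomatisable if $\Sigma$ is taken finite). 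I will remark that the same argument gives, for every finite $n$, that the class of atomic algebras in $\PA_n$ is elementary as well; the non-trivial failures of elementarity established elsewhere in the paper concern \emph{complete} representability, not mere atomicity, and it is there --- and not here --- that the dimension $2$ case is genuinely special.

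If one prefers not to write $\alpha$ down, the same conclusion follows from the Keisler--Shelah theorem: it suffices to see that the class is closed under ultraproducts and ultraroots. Ultraroots are handled at once, since an algebra embeds elementarily in each of its ultrapowers and, as $\alpha$ shows, atomicity is a first-order property. For an ultraproduct $\prod_{i/U}\A_i$ of atomic members of $\PA_2$ and a nonzero $a=\langle a_i\rangle/U$, the set $I_0=\{i:a_i\neq 0\}$ lies in $U$; picking an atom $b_i\leq a_i$ of $\A_i$ for $i\in I_0$ and setting $b_i=0$ otherwise, one checks routinely (for any $c\leq\langle b_i\rangle/U$ one has $c_i\in\{0,b_i\}$ for $U$-almost all $i$) that $\langle b_i\rangle/U$ is an atom below $a$, so the ultraproduct is again atomic; membership in $\PA_2$ is automatic since $\PA_2$ is a variety. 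There is accordingly no real obstacle in this theorem: the only points needing attention are the bookkeeping facts that $\PA_2$ is a recursively based variety in a finite type and that $\alpha$ expresses atomicity verbatim, and the substance of this part of the paper lies in keeping it cleanly separated from the much harder statements about complete representability in higher dimensions.
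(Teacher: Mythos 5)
Your argument is correct for the statement read literally: atomicity is expressible by a single first-order sentence over the (recursively axiomatised) variety $\PA_2$, so the class of atomic algebras in $\PA_2$ is elementary, in any dimension. But this reading makes the theorem trivial, and it is not what the paper is actually proving. The paper's own proof writes down, for distinct $i,j<2$, the sentences $\psi_{i,j}\colon\ y\neq 0\to \exists x({At}(x)\land s_i^jx\neq 0\land s_i^jx\leq y)$, adds them to the $\PA_2$ axioms to form $\Sigma$, and concludes $CRPA_2=\mathbf{Mod}(\Sigma)$ --- that is, the theorem (consistently with the abstract, which pairs it with the counterexample of Theorem 2.1) is that the class of \emph{completely representable} polyadic algebras of dimension $2$ is elementary. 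The extra axioms $\psi_{i,j}$ are exactly a first-order surrogate for complete additivity of the replacements, the property whose failure drives the non-complete-representability example earlier in the section; mere atomicity does not imply them, as that example shows.

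So the gap is not an error in what you wrote but a missing target: your proof establishes nothing about complete representability, which is the substantive content here. To prove the intended theorem one must argue two directions that your sketch never touches: (a) every completely representable $\A\in\PA_2$ satisfies each $\psi_{i,j}$ (essentially because in a complete representation the substitutions are completely additive, so the $s_i^j$-images of the atoms must sum to $1$), and (b) conversely, any atomic $\A\in\PA_2$ satisfying the $\psi_{i,j}$ is completely representable --- this is the nontrivial step, special to dimension $2$, and is precisely where the higher-dimensional analogue breaks down (as the paper shows via the Hirsch--Hodkinson constructions). Your closing remark that "the dimension $2$ case is genuinely special" for complete representability rather than atomicity is accurate, but it should have alerted you that the theorem being proved is the complete-representability statement, not the tautology about atomic classes.
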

\begin{proof} Let $\At(x)$ be the first order formula asserting that $x$ is an atom, namely, $\At(x)$ is the formula
$x\neq 0\land (\forall y)(y\leq x\to y=0\lor y=x)$. We first  assume that $n>1$ is finite, the other cases degenerate to the Boolean case.
For distinct $i,j<2$ let $\psi_{i,j}$ be the formula:
$y\neq 0\to \exists x(\At(x)\land s_i^jx\neq 0\land s_i^jx\leq y).$ Let $\Sigma$ be obtained from the axiomatization
$PA_n$ by adding $\psi_{i,j}$
for every distinct $i,j\in 2$.
Then $CRPA_2={\bf Mod}(\Sigma)$. 
\end{proof}

\section{Weakly representable atom structures that are not strongly representable}

For a fixed  graph $\G$, we define a family of labelled graphs $\cal F$ such that every edge of each graph $\Gamma\in {\cal F}$, 
is labelled  by a unque label from
$\G\cup \{\rho\}$, $\rho\notin \G$. Then one forms a labelled graph $M$ which can be viewed as model of a natural signiture, 
namely, the one with relation symbols $(a, i)$, for each $a \in \G \cup \{ \rho \}$ and
$ i < n$.  This $M$ can be constructed as a limit of finite structures, in the spirit of Fraisse constructions.
Then one takes a subset $W\subseteq {}^nM$, by roughly dropping assignments that do not satify $(\rho, l)$ for every $l<n$.
Formally, $W = \{ \bar{a} \in {}^n M : M \models ( \bigwedge_{i < j < n,
l < n} \neg (\rho, l)(x_i, x_j))(\bar{a}) \}.$
All this can be done with an arbirary graph.

Now for particular choices of $\G$; the algebra relativized set algebras based on $M$, but taking 
only sequences in $W$ in $L_n$ is an atomic 
representable algebra. This algebra has universe $\{\phi^M: \phi\in L_n\}$ where $\phi^M=\{s\in W: M\models \phi[s]\}.$
Its completion is the relatvized sets algebras consisting of $\phi^M$, $\phi\in L_{\infty,\infty}$, 
which turns out not representable. (All logics are taken in the above signature).
In fact, we will show that for certian choices of $\G$, it will not be even in $\SNr_nCA_{n+2}$.
Let us get more technical.

\begin{example}

\begin{enumarab}

\item A \textit{labelled graph} is an undirected graph $\Gamma$ such that
every edge ( \textit{unordered} pair of distinct nodes ) of $\Gamma$
is labelled by a unique label from $(\G \cup \{\rho\}) \times n$, where
$\rho \notin \G$ is a new element. The colour of $(\rho, i)$ is
defined to be $i$. The \textit{colour} of $(a, i)$ for $a \in \G$  is $i$.
Now we define a class $\GG$ of certain labelled graphs.The class $\GG$ consists of all complete labelled graphs $\Gamma$ (possibly
the empty graph) such that for all distinct $ x, y, z \in \Gamma$,
writing $ (a, i) = \Gamma (y, x)$, $ (b, j) = \Gamma (y, z)$, $ (c,
l) = \Gamma (x, z)$, we have:\\
\begin{enumarab}
\item $| \{ i, j, l \} > 1 $, or
\item $ a, b, c \in \G$ and $ \{ a, b, c \} $ has at least one edge
of $\G$, or
\item exactly one of $a, b, c$ -- say, $a$ -- is $\rho$, and $bc$ is
an edge of $\G$, or
\item two or more of $a, b, c$ are $\rho$.
\end{enumarab}

\item There is a countable labelled graph $M\in \GG$ with the following
property:\\

If $\triangle \subseteq \triangle' \in \GG$, $|\triangle'|
\leq n$, and $\theta : \triangle \rightarrow M$ is an embedding,
then $\theta$ extends to an embedding $\theta' : \triangle'
\rightarrow M$. 

Let $L^+$ be the signature consisting of the binary
relation symbols $(a, i)$, for each $a \in \G \cup \{ \rho \}$ and
$ i < n$. Let $L = L^+ \setminus \{ (\rho, i) : i < n \}$. From now
on, the logics $L^n, L^n_{\infty \omega}$ are taken in this
signature.
We may regard any non-empty labelled graph equally as an
$L^+$-structure, in the obvious way.

\item Let $W = \{ \bar{a} \in {}^n M : M \models ( \bigwedge_{i < j < n,
l < n} \neg (\rho, l)(x_i, x_j))(\bar{a}) \}.$
For an $L^n_{\infty \omega}$-formula $\varphi $, we define
$\varphi^W$ to be the set $\{ \bar{a} \in W : M \models_W \varphi
(\bar{a}) \}$, an we let $\A$ to be the relativised set algebra with domain
$$\{\varphi^W : \varphi \,\ \textrm {a first-order} \;\ L^n-
\textrm{formula} \}$$  and unit $W$, endowed with the algebraic
operations ${\sf d}_{ij}, {\sf c}_i, $ ect., in the standard way .
 Fix finite $N\geq n(n-1)/2$. 

$\G$ can be any graph that contains infinitely countably many cliques (complete subgraphs) each of size $N$.
For example it can be $\G=(\N,E)$ with nodes $\N$ and $i,l$ is an edge i.e $(i,l)\in E$ if  
$0<|i-l|<N$, or a countable union of cliques, denote by $N\times \omega$. 

Now let $G$ be an infinite countable graph that contains infinitely many $N$ cliques. 
Then $\A$ is a representable (countable) atomic polyadic algebra but $\Rd_{ca}{\cal C}\notin S\Nr_n\CA_{n+2}$, its complex lagebra is isomorphic
to the algebra consisting of formula in $L_{\infty}$ is not representable. Further, $\A$ is acually isomorphic to the term algebra over 
its atom structure. 

\end{enumarab}
\end{example}
The above example can be easily transferred to relation algebras as follows:
\begin{example}

We define a relation algebra atom structure $\alpha(\G)$ of the form
$(\{1'\}\cup (\G\times n), R_{1'}, \breve{R}, R_;)$.
The only identity atom is $1'$. All atoms are self converse, 
so $\breve{R}=\{(a, a): a \text { an atom }\}.$
The colour of an atom $(a,i)\in \G\times n$ is $i$. The identity $1'$ has no colour. A triple $(a,b,c)$ 
of atoms in $\alpha(\G)$ is consistent if
$R;(a,b,c)$ holds. Then the consistent triples are $(a,b,c)$ where

\begin{itemize}

\item one of $a,b,c$ is $1'$ and the other two are equal, or

\item none of $a,b,c$ is $1'$ and they do not all have the same colour, or

\item $a=(a', i), b=(b', i)$ and $c=(c', i)$ for some $i<n$ and 
$a',b',c'\in \G$, and there exists at least one graph edge
of $G$ in $\{a', b', c'\}$.

\end{itemize}
$\alpha(\G)$ can be checked to be a relation atom structure. 
The atom structure of $\Rd_{ca}\A$ is isomorphic (as a cylindric algebra
atom structure) to the atom structure ${\cal M}_n$ of all $n$-dimensional basic
matrices over the relation algebra atom structure $\alpha(\G)$.
Indeed, for each  $m  \in {\cal M}_n, \,\ \textrm{let} \,\ \alpha_m
= \bigwedge_{i,j<n}  \alpha_{ij}. $ Here $ \alpha_{ij}$ is $x_i =
x_j$ if $ m_{ij} = 1$' and $R(x_i, x_j)$ otherwise, where $R =
m_{ij} \in L$. Then the map $(m \mapsto
\alpha^W_m)_{m \in {\cal M}_n}$ is a well - defined isomorphism of
$n$-dimensional cylindric algebra atom structures.
We can show that thae $\Cm\alpha(\G)$ is not representable like exactly [weak]
using Ramseys theore.
Here we show something stronger.

\end{example}

\begin{theorem}
We have $\Cm\alpha(\G)$ is not in $S\Ra \CA_{n+2}$. 
\end{theorem}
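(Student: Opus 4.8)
The plan is to argue by contradiction, boosting the Ramsey-theoretic proof that $\Cm\alpha(\G)$ is not representable. For definiteness I would take $\G$ to be the disjoint union of countably many copies of $K_N$, with $N\ge n(n-1)/2$ the fixed number from the Example (this $\G$ is among the graphs allowed there). Assume $\Cm\alpha(\G)\in S\Ra\CA_{n+2}$. The first task is to convert this hypothesis, through the Hirsch--Hodkinson apparatus relating membership in $S\Ra\CA_m$ with $m$-dimensional basic matrices, relational bases and hyperbases of complete atomic relation algebras, into a concrete combinatorial object: \emph{arbitrarily large finite complete graphs} $\Gamma$ whose edges are labelled by non-identity atoms of $\alpha(\G)$ in such a way that for every three nodes $x,y,z$ the triple $(\Gamma(x,y),\Gamma(y,z),\Gamma(x,z))$ is a consistent triple of $\alpha(\G)$. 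Extracting such networks from the bare assumption $\Cm\alpha(\G)\in S\Ra\CA_{n+2}$ -- making sure that every edge genuinely receives an atom as a label and that every triangle is genuinely consistent -- is the delicate technical point, and is exactly where the strength of the hypothesis ``$n+2$'' and of the Hirsch--Hodkinson machinery is consumed; I expect this to be the main obstacle, while the rest is elementary.

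Granting that, I would fix such a $\Gamma$ with node set $X$ of size a sufficiently large finite Ramsey bound in $n$ and $N$, and write the label of an edge $\{x,y\}$ as the atom $(v(x,y),c(x,y))$ with $v(x,y)$ a vertex of $\G$ and $c(x,y)<n$; both $v$ and $c$ are symmetric. Applying Ramsey's theorem to the $n$-colouring $c$ of $[X]^2$ gives a still-large $Y\subseteq X$ all of whose edges carry a single colour $i<n$. Consistency of $\alpha(\G)$ then reads: for every triangle $\{x,y,z\}\subseteq Y$, two of the three vertices $v(x,y),v(y,z),v(x,z)$ are distinct and lie in a common copy of $K_N$.

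Now fix $x_0\in Y$ and examine how the vertices $v(x_0,y)$, $y\in Y\setminus\{x_0\}$, are distributed among the cliques of $\G$. If many of them fall in one clique $K$, then, as $|K|=N$, pigeonhole yields many $y$ with a common value $w=v(x_0,y)\in K$; for any two such $y_1,y_2$, consistency of $\{x_0,y_1,y_2\}$ with labels $w,w,v(y_1,y_2)$ forces $v(y_1,y_2)\in K\setminus\{w\}$, and a second application of Ramsey to this $(N-1)$-colouring produces a sub-set on which $v$ is constant, giving a triangle with all three labels equal -- impossible, since $\G$ has no loops. Otherwise each clique catches only boundedly many of the $v(x_0,y)$, so greedily I pick $y_1,\dots,y_t\in Y$ with $t$ large whose cliques $K_1,\dots,K_t$ are pairwise distinct, hence disjoint; since $v(x_0,y_a)\in K_a$ and $v(x_0,y_b)\in K_b$ are non-adjacent, consistency of $\{x_0,y_a,y_b\}$ forces $v(y_a,y_b)\in K_a\cup K_b$, and I let $\varepsilon(a,b)\in\{a,b\}$ record which. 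This is a tournament on $\{1,\dots,t\}$ (orient $\{a,b\}$ towards $\varepsilon(a,b)$), and consistency of the triangles $\{y_a,y_b,y_c\}$ forbids a directed $3$-cycle -- a $3$-cycle has no vertex with both of its triangle-edges pointing to it, yet consistency requires such a vertex, with its two labels distinct -- so the tournament is transitive. Picking $y^*$ with at least $N+1$ predecessors (possible once $t\ge N+2$), the $N+1$ labels $v(y^*,y)$ on the edges to those predecessors all lie in the single $N$-element clique $K_{y^*}$, so two coincide, say $v(y^*,y_a)=v(y^*,y_b)=w$; then $\{y^*,y_a,y_b\}$ has labels $w,w,v(y_a,y_b)$ with $v(y_a,y_b)\in K_a\cup K_b$ disjoint from $K_{y^*}$, contradicting consistency once more. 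Either way the existence of $\Gamma$ is contradicted, so $\Cm\alpha(\G)\notin S\Ra\CA_{n+2}$. To summarize: the combinatorial core is a double use of Ramsey's theorem together with the fact that a $3$-cycle-free tournament is transitive, and the only genuinely hard step is the representation-theoretic one of producing the large consistently labelled networks from the hypothesis.
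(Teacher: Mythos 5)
Your combinatorial core is essentially correct for the graph you chose (finite Ramsey on the colour coordinate, then the pigeonhole/tournament analysis inside the disjoint union of $K_N$'s), but the step you yourself flag as the main obstacle is a genuine gap, and it is not merely technical: the hypothesis does not deliver the object you need. From $\Cm\alpha(\G)\in S\Ra\CA_{n+2}$ the Hirsch--Hodkinson machinery gives an $(n+2)$-flat, hence $(n+2)$-square, \emph{relativized} representation $h:\Cm\alpha(\G)\to\wp(V)$. In such a representation a pair $(x,y)\in V$ is only guaranteed to lie in $h(P)$ for exactly one $P$ from any chosen \emph{finite} partition of the unit; it need not lie in $h(a)$ for any atom $a$, because $h$ is not required to preserve the infinite sum of the atoms (the set $\{a:(x,y)\in h(a)\}$ is an ultrafilter of the complex algebra which may well be non-principal). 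The alternative route through hyperbases does produce atom-labelled, consistency-respecting networks, but only on $n+2$ nodes at a time --- nowhere near the Ramsey bounds your argument consumes --- and the amalgamation property of a hyperbasis does not let you glue these into arbitrarily large atomic networks (if it did, you would essentially have a complete representation). So ``arbitrarily large finite complete graphs with every edge labelled by a non-identity atom and every triangle consistent'' cannot be extracted as you propose, and everything downstream of that step rests on labels you cannot produce.

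The paper's proof is built precisely around this obstruction. It fixes the finite partition $J=\{1'\}\cup\{[N\N+r,s]: r<N,\ s<n\}$ of the unit, where each non-identity block $P=[N\N+r,s]$ is a sum of atoms of one fixed colour $s$ whose $\G$-vertices are pairwise non-adjacent, so that already $(P;P)\cdot P=0$ in $\Cm\alpha(\G)$. Since $J$ is finite, every pair of the (infinite) base lies in $h(P)$ for some $P\in J$; one application of infinite Ramsey produces three points all of whose edges carry the same $P\neq 1'$, and $(n+2)$-squareness then forces $(P;P)\cdot P\neq 0$ --- contradiction. In other words, the clever choice of partition makes the combinatorics trivial and renders your atoms-plus-tournament analysis unnecessary; conversely, your finer analysis only becomes available after a reduction that is not justified. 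To repair your argument you would either have to redo the clique analysis at the level of a finite partition of the unit (which leads you back to the paper's $J$), or prove separately that an $(n+2)$-square representation of the \emph{complex} algebra can be refined to atomic edge-labels, which is false in general.
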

\begin{proof} The idea is to use relativized representations. Such algebras are 
localy representable, but the epresentation is global enough so that Ramseys theorem applies. 
Hence the full complex cylindric algebra over the set of $n$ by $n$ basic matrices
- which is isomorphic to $\cal C$ is not in $S\Nr_n\CA_{n+2}$ for we have a relation algebra
embedding of $\Cm\alpha(\G)$ onto $\Ra\Cm{\cal M}_n$.
Assume for contradiction that $\Cm\alpha(\G)\in S\Ra \CA_{n+2}$. Then $\Cm\alpha(\G)$ has an $n$-flat representation $M$ \cite{HH}  13.46, 
which is $n$ square \cite{HH} 13.10. 
In particular, there is a set $M$, $V\subseteq M\times M$ and $h: \Cm\alpha(\G)\to \wp(V)$ 
such that $h(a)$ ($a\in \Cm\alpha(\G)$) is a binary relation on $M$, and
$h$ respects the relation algebra operations. Here $V=\{(x,y)\in M\times M: M\models 1(x,y)\}$, where $1$ is the greatest element of 
$\Cm\alpha(\G)$.
A clique $C$ of $M$ is a subset of the domain $M$ such that for $x,y\in C$ we have $M\models 1(x,y)$, equivalently $(x,y)\in V$.
Since $M$ is $n+2$ square, then for all cliques $C$ of $M$ with $|C|<n+2$, all $x,y\in C$ and $a,b\in \Cm\alpha(\G)$, $M\models (a;b)(x,y)$ 
there exists $z\in M$ such that $C\cup \{z\}$ is a clique and $M\models a(x,z)\land b(z,y)$.
For $Y\subseteq \N$ and $s<n$, set 
$$[Y,s]=\{(l,s): l\in Y\}.$$
For $r\in \{0, \ldots N-1\},$ $N\N+r$ denotes the set $\{Nq+r: q\in \N\}.$
Let $$J=\{1', [N\N+r, s]: r<N,  s<n\}.$$
Then $\sum J=1$ in $\Cm\alpha(\G).$
As $J$ is finite, we have for any $x,y\in M$ there is a $P\in J$ with
$(x,y)\in h(P)$.
Since $\Cm\alpha(\G)$ is infinite then $M$ is infinite. 
By Ramsey's Theorem, there are distinct
$x_i\in X$ $(i<\omega)$, $J\subseteq \omega\times \omega$ infinite
and $P\in J$ such that $(x_i, x_j)\in h(P)$ for $(i, j)\in J$, $i\neq j$. Then $P\neq 1'$. 
Also $(P;P)\cdot P\neq 0$. 
This follows from $n+2$ squareness and that if $x,y, z\in M$, 
$a,b,c\in \Cm\alpha(\G)$, $(x,y)\in h(a)$, $(y, z)\in h(b)$, and 
$(x, z)\in h(c)$, then $(a;b)\cdot c\neq 0$. 
A non -zero element $a$ of $\Cm\alpha(\G)$ is monochromatic, if $a\leq 1'$,
or $a\leq [\N,s]$ for some $s<n$. 
Now  $P$ is monochromatic, it follows from the definition of $\alpha$ that
$(P;P)\cdot P=0$. This contradiction shows that 
$\Cm\alpha(\G)$ is not in $S\Ra\CA_{n+2}$. Hence $\Cm{\cal M}_n\notin S\Nr_n\CA_{n+2}$.

\end{proof}

We have not seen a publication of ths result, though its proof can be easily destilled from known rather involved proofs.

\begin{lemma} Let $\D$ be a polyadic equality algebra of dimension $n\geq 3$, that is generated by the set $\{x\in D: \Delta x\neq n\}.$
Then if $\Rd_{qa}\D$ is completely representable, then so is $\D$.
\end{lemma}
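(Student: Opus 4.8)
The plan is to show that, once $n\geq 3$ and $\D$ is generated by its low-dimensional part, a complete representation of $\Rd_{qa}\D$ already forces one of $\D$: the quasi-polyadic reduct carries all the information except the interpretation of the diagonals ${\sf d}_{ij}$, and being generated by elements of dimension $<n$ is exactly what pins those down.

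First the routine steps. Complete representability passes to the $\Rd_{qa}$-reduct, hence to the Boolean reduct, so $\D$ and $\Rd_{qa}\D$ are atomic with a common atom structure, and the hypothesis yields a complete neat embedding $\Rd_{qa}\D\subseteq_c\Nr_n\E$ with $\E\in\QA_\omega$ (the easy direction of the complete--neat--embedding characterisation of complete representability, which holds for quasi-polyadic algebras just as for cylindric ones; moreover, since we start from a concrete complete representation, $\E$ may be taken locally finite). Replacing $\E$ by the $\QA_\omega$-subalgebra generated by the image of $G:=\{x\in D:\Delta x\neq n\}$, we may assume $\E$ is generated by a copy of $G$, with the copy of $\D$ lying inside $\Nr_n\E$ and the embedding still complete; here we use that $G$ generates $\D$ as a polyadic equality algebra exactly when it generates it as a quasi-polyadic algebra, since for $n\geq 3$ every diagonal ${\sf d}_{ij}$ with $i\neq j<n$, and $0$ and $1$ as well, lies in $G$ (their dimension sets have size $\le 2<n$).

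Next I would restore the diagonals on $\E$. For $i,j<\omega$ pick a finitely supported $\tau\colon\omega\to\omega$ with $\tau(0)=i$ and $\tau(1)=j$, and set ${\sf d}_{ij}^{\E}:={\sf s}^{\tau}({\sf d}_{01}^{\D})$, the right side evaluated in $\E$ with ${\sf d}_{01}^{\D}$ read inside $\E$; for $i,j<n$ this returns ${\sf d}_{ij}^{\D}$, since ${\sf d}_{ij}={\sf s}^{\tau}{\sf d}_{01}$ is a $\PEA_n$-identity. Let $\F$ be $\E$ expanded by these operations. The claim is that $\F\in\PEA_\omega$, that it is (locally finite, hence) representable, and that $\D\subseteq_c\Nr_n\F$; granting this and atomicity of $\D$, the converse direction of the characterisation, applied now to polyadic equality algebras, yields that $\D$ is completely representable, as required. (One could alternatively try to correct the diagonals directly inside a given set representation of $\Rd_{qa}\D$, but that surgery is no less delicate than what follows.)

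The main obstacle, and the step where the generating hypothesis does its real work, is checking that the family $({\sf d}_{ij}^{\E})_{i,j<\omega}$ obeys the polyadic equality axioms at \emph{all} indices: the Henkin--Monk type identities ${\sf c}_k({\sf d}_{ki}\cdot{\sf d}_{kj})={\sf d}_{ij}$ for $k\notin\{i,j\}$, the commutation laws with the ${\sf c}_k$ and with the substitutions ${\sf s}^{\sigma}$, the interaction ${\sf d}_{ij}\cdot{\sf s}^{i}_{j}x={\sf d}_{ij}\cdot x$, and so on. I would argue locally: any one such instance involves finitely many elements of $\E$, each a $\QA_\omega$-term in finitely many members of $G$, hence supported on a finite set $\Gamma\subseteq\omega$ of coordinates; projecting everything onto $\Gamma$ lands one in a finite-dimensional situation in which the elements in play already sit inside $\D$ (or an evident finite-dimensional dilation of it) as a bona fide polyadic equality algebra, so the identity holds there and transfers back. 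Turning this locality argument into a clean proof — carefully tracking dimension sets as the substitutions permute and fuse coordinates, confirming that completeness of the embedding survives the expansion, and arranging that $\E$ (hence $\F$) is chosen so that representability on the $\omega$-dimensional side is genuinely available — is the delicate part; the remainder is manipulation of standard identities.
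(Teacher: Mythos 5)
Your route through the neat-embedding characterisation is genuinely different from the paper's, which never leaves the concrete representation: there one first rectifies the given complete representation $h$ of $\Rd_{qa}\D$ so that its unit is a cube ${}^nU$ and every tuple with $s_i=s_j$ lands in $h(\diag{i}{j})$, then defines $\sim_{ij}=\{(a_i,a_j):\bar a\in h(\diag{i}{j})\}$ and shows --- using completeness of $h$ and the hypothesis that $\{x:\Delta x\neq n\}$ generates --- that the set of elements whose image is a union of $\sim$-classes is a complete subalgebra containing the generators, hence all of $D$; quotienting by $\sim$ then yields a complete representation of $\D$ itself (simple case first, then a product decomposition). Measured against that, your plan has two genuine gaps.

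First, the step you defer as ``the delicate part'' is where the argument actually breaks. After expanding $\E\in\QA_{\omega}$ by $\diag{i}{j}^{\E}:={\sf s}^{\tau}(\diag{0}{1}^{\D})$ you must verify, among others, $x\cdot\diag{i}{j}^{\E}\leq\sub{i}{j}x$ for \emph{every} $x\in\E$. Nothing forces this: even in a concrete diagonal-free representation of $\E$, the element $\diag{0}{1}^{\D}$ is merely some set of sequences, not the true diagonal, so these axioms can simply fail; and the ``project onto a finite support set $\Gamma$'' manoeuvre has no meaning in an abstract $\E$ and does not reduce the verification to anything already established. The generation hypothesis must be used to control how \emph{all} elements interact with the would-be diagonals --- which is precisely what the paper's $\sim$-class argument accomplishes --- and your sketch does not supply that control. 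Second, the converse direction of the characterisation you invoke at the end (atomic plus a complete neat embedding into an $\omega$-dimensional algebra implies complete representability) is only available for \emph{countable} algebras, as the paper itself notes, whereas the lemma carries no countability hypothesis and the paper's direct proof needs none. So even if the diagonal expansion were repaired, your argument would only prove a strictly weaker statement.
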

\begin{proof} First suppose that $\D$ is simple, and let $h: \D\to \wp(V)$ be a complete representation, where $V=\prod_{i<n}U_i$ for sets $U_i$.
We can assume that $U_i=U_j$ for all $i,j<n$, and if $s\in V$, $i,j<n$ and $a_i=a_j$ then $a\in h(d_{ij}$. Indeed, let $\delta =\prod d_{ij}\in \D$. 
As $C$ is a cyilndric algebra, we have $c_{(n)}\delta =1$, so for each $u\in U_i$ there is an $s\in h(\delta)$ with $a_i=u$. 
So there exists a function $s_i: U_i\to h(\delta)$ such that $(s_i(u))_i=u$ for each $u\in U_i$.

Let $U$ be the disjoint union of the $U_i$s. Let $t_i:U\to U_i$ be the surjection defined by $t_i(u)=(s_j(u))_i$.
Let $g: \D\to \wp(^nU)$ be defined via 
$$d\mapsto \{s\in {}^nU: (t_0(a_0),\ldots, t_{n-1}(a_{n-1}))\in h(d)\}.$$
Then $g$ is a complete representation of $\D.$ Now suppose $s\in {}^nU$, satisfies $s_i=s_j$ with $a_i\in U_k$, say, where $k<n$. 
Let $\bar{b}=s_k(a_i)=s_k(a_j)\in h(\delta).$ Then $t_i(a_i)=b_i$ and $t_j(a_j)=b_j$, so $(t_i(a_i): i<n)$ 
agrees with $\bar{b}$ on coordinates $i,j$. Since $\bar{b}\in h(\delta)$ and 
$\Delta d_{ij}=\{i,j\}$, then $(t_i(a_i): i<n)\in h(d_{ij}$ and so $s\in g(d_{ij}),$ as required.

Now define $\sim_{ij}=\{(a_i, a_j): \bar{a}\in h(d_{ij})$. Then it easy to 
check that $\sim_{01}=\sim_{i,j}$ is an equivalence relation on $U$. For 
$s,t\in {}^nU$, define $s\sim t$, if $s_i\sim t_i$ for each $i<n$, then $\sim$ is an equivalence relation
on $^nU$. Let
$$E=\{d\in D: h(d)\text { is a union of $\sim$ classes }\}.$$
Then $$\{d\in D: \Delta d\neq n\}\subseteq E.$$

Furthermore, $E$ is the domain of a complete subalgebra of $\C$. 
Let us check this. We have $\{0,1, d_{ij}: i,j<n\}\subseteq E$, since $\Delta 0=\Delta 1=\emptyset$ 
and $\Delta d_{ij}=\{i,j\}\neq n$ (as $n\geq 3$).
If $h(d)$ is a union of $\sim$ classes, then so
is $^nU\setminus h(d)= h(-d)$. If $S\subseteq E$ and $\sum S$ exists in $\D$, then because $h$ is complete representation 
we have $h(\sum^{\D}S)=\bigcup h[S]$, a union of $\sim$ classes
so $\sum S\in E$. Hence $E=C$. Now define $V=U/\sim_{01}$, and
define $g:\C\to \wp(^nV)$ via
$$c\mapsto \{(\bar{a}/\sim_{01}): \bar{a}\in h(c).$$
Then $g$ is a complete representation.

Now we drop the assumption that $\D$ is simple. Suppose that $h:\D\to \prod_{k\in K} Q_k$ is a complete representation. 
Fix $k\in K$, let $\pi_k: Q\to Q_k$ be the canonical projection, and let 
$\D_k=rng(\pi_k\circ h)$. We define diagonal elements in $\D_k$ by $d_{ij}=\pi_k(h^{\C}(d_{ij}))$. This expands $\D_k$ 
to a cylindric-type algebra $\C_k$ that is a homomorphic image of $\C$, and hence
is a cylindric algebra with diagonal free reduct $\D_k$. Then the inclusion map $i_k:\D_k\to Q_k$ is a complete
representation of $D_k$. 
Since obviously
$$\pi_k[h[\{c\in C: \Delta c\neq n\}]\subseteq \{c\in C_k: \Delta c\neq n\}$$ 
and $\pi_k, h$ preserve arbitrary sums, then $C_k$ is completely generated by $\{c\in C_k: \Delta c\neq n\}$. 
Now $c_{(n)}x$ is a discriminator term in $Q_k$, so $D_k$ is simple.
So by the above $\C_k$ has complete represenation $g_k:\C_k\to Q_k'$. Define
$g: \C\to \prod_{k\in K}Q_k'$ via
$$g(c)_k=g_k(\pi_k(h(c))).$$
Then $g$ defines a complete representation.
\end{proof}


\begin{example}

In definition 3.6.3 \cite{HH} a cylindric atom structure is defined from a family $K$ of $L$ structures, closed under forming subalgebra. This class is 
formulated in a language
$L$ of relation symbols $<n$. Call this atom structure $\rho(K)$. 
The atom structure,  can be turned easily into a polyadic equality atom structure 
by defining accesibility relations correponding to the substituton $s_{i,j}$ by: $R_{ij}=\{[f], [g]: f, g \in {\cal F}: f=g\circ [i,j]\}.$

Two examples are given of such clases, what concerns us is the second (rainbow) class defined in 3.6. 9, referred to as classes
based on on graph.
Fix a graph $\Gamma$. The rainbow polyadic equality algebra based on this graph is denoted by $R(\Gamma)$ 
is the complex algebra of $\rho(K(\Gamma))$, namely $\Cm\rho(K(\Gamma))$.
It is proved that If $\Gamma$ is a countable graph, then the cylindric algebr $R(\Gamma)$ is completely reprsentable
if and only if $\Gamma$ contains a reflexive node or an infinite clique, 
This proof can be checked to work for polyadic equality algebras, and by our previous lemma, it also works for
polyadic algebras.

Define $K_k$ and $\Gamma$ as in corollary 3.7.1 in \cite{HH}. Then $R(\Gamma)$ is s completely representable.
But $\Gamma$ has arbirary large cliques, hence it is 
elementay equivalent to a countable graph $\Delta$ with an infinite clique. Then $R(\Delta)\equiv R(\Gamma)$,
and by the above chracteization the latter is completely representable, the former is not.
Notice that $\Delta\equiv \Gamma$ as first order structures.
\end{example}


\subsection{The infinite dimensional case}

Let us try to extend the result concerning existence of weaky representable atom structures that are not strongly so.
If we insist on using graphs and model theory we wil have to change our base logic, to allows
infnitary formulas.
For simplicity we consider the arity of formulas to be at most
$\omega$. $L^{\omega}$ is a quantifier logic that allows infinitary predicates of arbitrary rank, 
and otherwise is like first order logic, in particular quantification can be taken only on finitely many 
variables. $L^{\omega}_{\infty}$ is the logic obtained from $L^{\omega}$
by adding infinite conjunctions. Let $\G$ be a graph.

\begin{enumarab}

\item A \textit{labelled graph} is an undirected graph $\Gamma$ such that
every edge ( \textit{unordered} pair of distinct nodes ) of $\Gamma$
is labelled by a unique label from $(\G \cup \{\rho\}) \times \omega$, where
$\rho \notin \G$ is a new element. The colour of $(\rho, i)$ is
defined to be $i$. The \textit{colour} of $(a, i)$ for $a \in \G$  is $i$.
Now we define a class $\GG$ of certain labelled graphs.The class $\GG$ consists of all complete labelled graphs $\Gamma$ (possibly
the empty graph) such that for all distinct $ x, y, z \in \Gamma$,
writing $ (a, i) = \Gamma (y, x)$, $ (b, j) = \Gamma (y, z)$, $ (c,
l) = \Gamma (x, z)$, we have:\\
\begin{enumarab}
\item $| \{ i, j, l \} > 1 $, or
\item $ a, b, c \in \G$ and $ \{ a, b, c \} $ has at least one edge
of $\G$, or
\item exactly one of $a, b, c$ -- say, $a$ -- is $\rho$, and $bc$ is
an edge of $\G$, or
\item two or more of $a, b, c$ are $\rho$.
\end{enumarab}

\item There is a countable labelled graph $M\in \GG$ with the following
property:\\

If $\triangle \subseteq \triangle' \in \GG$, $|\triangle'|
\leq n$, and $\theta : \triangle \rightarrow M$ is an embedding,
then $\theta$ extends to an embedding $\theta' : \triangle'
\rightarrow M$. 

Let $L^+$ be the signature consisting of the binary
relation symbols $(a, i)$, for each $a \in \G \cup \{ \rho \}$ and
$ i < \omega$. Let $L = L^+ \setminus \{ (\rho, i) : i < \omega \}$. From now
on, the logics $L^{\omega}, L^{\omega}_{\infty}$ are taken in this
signature.
Fix $p\in {}^{\omega}M$,  and let $V={}^{\omega}M^{(p)}$. For $\bar{a}\in V$ and $\phi\in L^{\omega}_{\infty}$ 
satifiability is defined the usual Tarskian 
way. For a formula $\phi$, we write $\phi^{\M}$ for all asignments in $V$ that satisfy $M$.

\item Let $W = \{ \bar{a} \in {}V : M \models ( \bigwedge_{i < j < \omega,
l <\omega} \neg (\rho, l)(x_i, x_j))(\bar{a}) \}.$
For an $L^{\omega}_{\infty}$ formula $\varphi $, we define
$\varphi^W$ to be the set $\{ \bar{a} \in W : M \models_W \varphi
(\bar{a}) \}$, an we let $\A$ to be the relativised set algebra with domain
$$\{\varphi^W : \varphi \,\ \textrm {an} \;\ L^{\omega}-
\textrm{formula} \}$$  and unit $W$, endowed with the algebraic
operations ${\sf d}_{ij}, {\sf c}_i, $ ect., in the standard way .
Let $\C$ be the algebra with base $\varphi^W$ in $L^{\omega}_{\infty}$ and operations as above. 

For $\A$ to a a representable (countable) atomic polyadic algebra, we need a graph with arbitrary large cliques. For 
$\C$ its completion to be non-representable, we need a finite chromatic number to apply Ramseys theorem. These two conditions are incompatible.
However, it might be possible in this context, to use the Erdos-Rado theorem, 
extending Ramseys theorem to the uncountable case, by noting that a representation of the complex algebra must have an uncountable 
base.
\end{enumarab}

\section{Two polyadic atom structures equivalent in $L_{\infty, \omega}$, generating (in their complex algebra)
two polyadic algebras one in $\Nr_n\PA_{\omega}$
and the other not in $\Nr_n\PA_{n+1}$.}

A class closely related to the class of completely representable algebras is that of neat reducts; 
the completely representable algebras are those tha have a strong neat embedding 
property. This characterization works even for the infinite dimensional case, if we take weak structures. But in all cases it only adresses 
the countable case \cite{OTT}, \cite{Sayed}.

Both classes are non elementary for all dimensions $>2$.
For quasipolyadic algebras of infinite dimensions, however,  it is not known whether atomic algebras are completely representable or not.
This is anther result for which there is an unbased feeling in the air that it is true.
Both classes are psuedo-elementary.

But now we show that there is a very important diference.
An atom structure which is completely representable, have all atomic algebras based on it completely representable, 
but this is not the case for neat reducts. The former class is not elementary, and it seems that the class of atom structures 
for which algebras based are neat reducts is also not- elementary. (We are a little bit careless about the number of extra dimensions in the neat reduct,
but its variation leads to the richness of the problem. 
We could require that both algebras have the same nuber of extra dimensions, but we can also not asume that. We have not pursued this matter
any further). 
Next we give results results concerning neat reducts, for cylindric and polyadic algebras. 
An atom structure of dimension $\alpha$ is (strongly) neat if (every) some algebra based on this atom structure is in $\Nr_{\alpha}\CA_{\alpha+\omega}$.

\begin{theorem} For every ordinal $\alpha>1$, there exists a neat atom structure of dimension $\alpha$, 
that is not strongly neat.
\end{theorem}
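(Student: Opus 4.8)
The plan is to settle the finite case $\alpha=n\geq 2$ in detail, exhibiting a \emph{single} atom structure that carries two atomic algebras with opposite behaviour, and then to indicate how to lift to arbitrary infinite $\alpha$. For $\alpha=n$ I would let $\At$ be the atom structure $\At\A$ of the graph-based algebra of the Example in \S3, built from a countable graph $\G$ with arbitrarily large finite cliques; as recorded there, the associated relativized set algebra $\A$ of $L^n$-formulas is then a countable atomic representable polyadic (hence cylindric) algebra with $\A\cong\Tm\At$. The two algebras based on $\At$ will be $\A=\Tm\At$ itself and $\B=\Cm\At$, the latter being the relativized set algebra of $L^n_{\infty\omega}$-definable subsets of $W$, which (as noted in \S3) is not representable.

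The substantive half is to show $\A\in\Nr_n\CA_\omega$, so that $\At$ is neat. Here I would run the analogous construction in $\omega$ dimensions, keeping conjunctions finite so the ambient logic is ordinary first order, obtaining a $\D\in\CA_\omega$ whose base is the set of interpretations in the $\omega$-dimensional analogue of $W$ of $L^\omega$-formulas. The inclusion $\A\subseteq\Nr_n\D$ is clear since the interpretation of an $n$-ary formula depends on only finitely many coordinates. For the reverse inclusion I would use property (2) of the Example — every embedding into $M$ of a labelled graph of size $\leq n$ extends — to argue, by a back-and-forth argument on $W$, that the $n$-variable fragment already captures every subset of $W$ that is first-order definable using extra variables but supported by the coordinates $\{0,\dots,n-1\}$; hence each element of $\D$ lying in its neat $n$-reduct equals $\varphi^W$ for some $L^n$-formula $\varphi$, i.e. lies in $\A$. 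Thus $\Nr_n\D=\A$.

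That $\B=\Cm\At\notin\Nr_n\CA_\omega$ is then essentially free: by the Theorem of \S3, $\Rd_{ca}\B\notin\SNr_n\CA_{n+2}$ (the Ramsey-plus-relativized-representation argument applied to the relation-algebra reduct $\Cm\alpha(\G)$ and its $n$-dimensional basic matrices), and since $\Nr_n\CA_\omega\subseteq\SNr_n\CA_\omega\subseteq\SNr_n\CA_{n+2}$ we get $\B\notin\Nr_n\CA_\omega=\Nr_n\CA_{n+\omega}$. As $\A$ and $\B$ are distinct algebras over the common atom structure $\At$ (one representable, one not), $\At$ is neat but not strongly neat, which settles $\alpha=n$.

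For infinite $\alpha$ the plan is to lift: starting from $\D\in\CA_\omega$ with $\Nr_n\D\cong\A$, dilate to $\D'\in\CA_{\alpha+\omega}$ and put $\A_\alpha=\Nr_\alpha\D'$, whose atom structure $\At_\alpha$ is then neat by construction; the matching non-neat algebra on $\At_\alpha$ is obtained by repeating the completion/blow-up argument in $\alpha$ dimensions, the failure of $\SNr_\alpha\CA_{\alpha+2}$-membership for the relevant completion now coming from the Erd\H{o}s--Rado theorem in place of Ramsey's theorem, as flagged in \S3.1, since any representation of that completion must have an uncountable base. I expect the two real obstacles to be exactly these: in the finite case, showing that $\A$ is the \emph{full} neat $n$-reduct rather than merely neatly embeddable, which is where the homogeneity of $M$ must be pushed through the extra-variable fragment; and in the infinite case, the tension — already signalled in \S3.1 — between needing arbitrarily large cliques for representability of $\A_\alpha$ and needing a usable chromatic bound to defeat its completion, so that the uncountable partition calculus must be arranged with care. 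Accordingly I would present the infinite-dimensional case only in outline, as the paper itself does.
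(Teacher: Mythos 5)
Your strategy inverts the paper's: you try to place the \emph{term} algebra $\Tm\At$ inside $\Nr_n\CA_\omega$ and use the non-representable completion $\Cm\At$ as the witness against strong neatness. The second half is fine, but the first half cannot be repaired, because the claim you call ``the substantive half'' is actually false. By the strong-neat-embedding characterization of complete representability that the paper itself invokes in \S 5 (see \cite{ANT}, \cite{OTT}), a \emph{countable atomic} algebra lying in ${\bf S}_c\Nr_n\CA_\omega$ --- in particular any countable atomic algebra that is a full neat reduct $\Nr_n\D$ for some $\D\in\CA_\omega$ --- is completely representable. A complete representation of $\Tm\At$ induces a representation of $\Cm\At$ (send each set of atoms to the union of the images of its members). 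Since $\Cm\At$ is not representable --- the very fact you rely on for the other half of the argument --- it follows that $\Tm\At\notin\Nr_n\CA_\omega$. So no back-and-forth argument on $W$ can establish $\Nr_n\D=\A$; the obstruction is structural, not a missing lemma. Two further defects: the graph construction gives nothing at $\alpha=2$ (where $\RCA_2$ is closed under completions, so there is no weakly-but-not-strongly representable atom structure of this kind), although the theorem is asserted for every $\alpha>1$; and for infinite $\alpha$ you lean on the Erd\H{o}s--Rado speculation that \S 3.1 explicitly flags as an unresolved tension, so that case is not proved either.

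The paper's proof runs in exactly the opposite direction and avoids all of this. It takes the atom structure to be the weak space $V=\{s\in{}^{\alpha}\F:|\{i:s_i\neq 0\}|<\omega\}$ over a field $\F$ of characteristic $0$, with atoms the singletons. The \emph{full} complex algebra $\wp(V)$ is a full neat reduct essentially for free: $\wp(V)\cong\Nr_{\alpha}\wp({}^{\alpha+\omega}\F^{(\bar 0)})$ via $X\mapsto\{s:s\restriction\alpha\in X\}$, so the atom structure is neat. The witness against strong neatness is then a \emph{small} subalgebra, $\A=\Sg^{\C}\{y,y_s:s\in y\}$ with $y=\{s\in V:s_0+1=\sum_{i>0}s_i\}$, which is shown to be atomic with the same atoms (each singleton is recovered as the intersection of two cylindrifications of elements of $y$) while $\Rd_{\Sc}\A\notin\Nr_{\alpha}\Sc_{\alpha+1}$. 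This is uniform in $\alpha$, covers $\alpha=2$ and all infinite ordinals, and needs only one extra dimension. The general moral your proposal misses is that for a ``neat but not strongly neat'' atom structure one should make the algebra that \emph{is} a neat reduct as large as possible (the full complex algebra of a weak set algebra) and exhibit a defective \emph{subalgebra}, rather than the other way around.
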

\begin{demo}{Proof}
Let $\alpha>1$ and $\F$ is field of characteristic $0$. 
Let 
$$V=\{s\in {}^{\alpha}\F: |\{i\in \alpha: s_i\neq 0\}|<\omega\},$$
Note that $V$is a vector space over the field $\F$. 
We will show that $V$ is a weakly neat atom structure that is not strongly neat.
Indeed $V$ is a concrete atom structure $\{s\}\equiv _i \{t\}$ if 
$s(j)=t(j)$ for all $j\neq i$, and
$\{s\}\equiv_{ij} \{t\}$ if $s\circ [i,j]=t$.

Let $\C$ be the full complex algebra of this atom structure, that is
$${\C}=(\wp(V),
\cup,\cap,\sim, \emptyset , V, {\sf c}_i,{\sf d}_{ij}, {\sf s}_{ij})_{i,j\in \alpha}.$$  
Then clearly $\wp(V)\in \Nr_{\alpha}\CA_{\alpha+\omega}$.
Indeed Let $W={}^{\alpha+\omega}\F^{(0)}$. Then
$\psi: \wp(V)\to \Nr_{\alpha}\wp(W)$ defined via
$$X\mapsto \{s\in W: s\upharpoonright \alpha\in X\}$$
is an isomomorphism from $\wp(V)$ to $\Nr_{\alpha}\wp(W)$.
We shall construct an algebra $\A$ such that $\At\A\cong V$ but $\A\notin \Nr_{\alpha}\CA_{\alpha+1}$.

Let $y$ denote the following $\alpha$-ary relation:
$$y=\{s\in V: s_0+1=\sum_{i>0} s_i\}.$$
Note that the sum on the right hand side is a finite one, since only 
finitely many of the $s_i$'s involved 
are non-zero. 
For each $s\in y$, we let 
$y_s$ be the singleton containing $s$, i.e. $y_s=\{s\}.$ 
Define 
${\A}\in WQEAs_{\alpha}$ 
as follows:
$${\A}=\Sg^{\C}\{y,y_s:s\in y\}.$$
We shall prove that  
$$\Rd_{SC}\A\notin \Nr_{\alpha}SC_{\alpha+1}.$$ 
That is for no $\mathfrak{P}\in SC_{\alpha+1}$, it is the case that $\Sg^{\C}X$ exhausts the set of all $\alpha$ dimensional elements 
of $\mathfrak{P}$. 
So assume, seeking a contradiction,  that $\Rd_{SC}{\A}\in \Nr_{\alpha}SC_{\alpha+1}$. 
Let $X=\{y_s:s\in y\}$. 
Of course every element of $X,$ being a singleton, is an atom.
Next we show that $\A$ is atomic, i.e evey non-zero element contains a minimal non-zero element.  
Towards this end, let $s\in {}^{\alpha}\F^{(\bold 0)}$ be an arbitrary sequence. 
Then 
$$\langle s_0,s_0+1-\sum_{i>1}s_i,s_i\rangle_{i>1}$$ 
and 
$$\langle \sum_{0< i<\alpha}s_i-1,s_i\rangle_{i\geq 1}$$ are elements in y.
Since 
$$\{s\}={\sf c}_1\{\langle s_0,s_0+1-\sum_{i>1}s_i,s_i\rangle_{i>1}\}\cap
{\sf c}_0\{\langle \sum_{0\neq i<\alpha}s_i-1,s_i\rangle_{i\geq 1}\},$$ 
It follows that $\A$ has the same atom structure.

\end{demo}

\section{Stronger Logics}

We now show that logics like $L_{\kappa, \omega}$ and $L_{\infty,\omega}$ cannot characterize the class of neat 
reducts.The second case is of course much stronger.
The first cast can be destilled from the case in \cite{MLQ}, by simple modifications. 
First we let our language have $\kappa^+$ predicate symbols (instead just countably many). 
In this case $\A_u$, as defined in \cite{MLQ} will
have cardinality $\kappa^+$. Then we alter the $uth$ component, and its permuted versions,
by inserting in a Boolean algebra that $L_{\kappa, \omega}$ equivalent to $\A$, whose 
cardinality is $\kappa$. The rest of the proof works. 

But now we prove the stronger result and this needs a more drastic change. 
We will make our components {\it atomic} Boolean algebras, and for this we require that
the basic relations defined in \cite{MLQ}  not only distinct,  but {\it disjoint.}. This is necessary if we want atomic algebras. 
We use a different more basic method to contruct our desired model, which has apperaed in previous publications of ours , in related contexts; and has proved to be quite a nut cracker in these kinds of problems. We include proof for the 
readers conveniance.

$(R,+)$ denotes an arbitray uncountable group, and  
$n=\{0,\cdots,n-1\}$ denotes 
a fixed finite ordinal $ >1.$
\begin{definition}
Let $k<\omega$. Then $S(n,k)$ 
denotes the set of sequences $\langle  i_0,\cdots,i_{n-1}\rangle$ 
such that $i_0\leq i_1\cdots\leq i_{n-1}=k$.
$Cof^+(R)$ denotes the set of all nonempty finite or cofinite subsets of
$R$, i.e. 
$$Cof^+(R)=\{X\subseteq R: X\text{ is non empty, and $X$ or } R-X \text{ is finite}\}.$$
Let $C_r$ be an $n$-ary relation symbol for every $r\in R$.  
For any finite $X\subseteq R$, we define the formulas:
$$\eta (X) =\lor \{C_r(x_0,\cdots,x_{n-1}): r\in X\},\quad\text{and }$$
$$\eta (R-X)=\neg \eta(X)=\land \{\neg C_r(x_0,\cdots,x_{n-1}): r\in X\}.$$
Let $U$ be a set and $E$ an equivalence relation on $U$. Then we write
$xEy$ if $(x,y)\in E$. We write $xE'y$ if $(x,y)\notin E$. 
Suppose that $E$ has distinct
$n$ equivalence classes, or blocks . Then we write $D_E(x_0,x_1\cdots x_{n-1})$ for the 
formula $\bigwedge_{0\leq i<j<n} x_iE' x_j$ asserting that
$x_i,x_j$ are pairwise unrelated according to $E$, for all $i<j<n$. 
That is for all $s\in {}^nU$, $D_E(s_0,\cdots, s_{n-1})$
iff the $s_i$'s belong to distinct blocks.
\end{definition}

\begin{theorem} There are a set $W$, an equivalence relation
$E$ with $n$ blocks on $W$, and $n$-ary relations $C_r\subseteq {}^nW$
for all $r\in R$,  such that conditions (i)-(v) below hold:

\begin{enumroman}

\item $C_r(w_0,w_1\cdots ,w_{n-1})$ implies $D_E(w_0,w_1\cdots ,w_{n-1})$ for
all $r\in R$, and for all $w_0,\cdots,w_{n-1}\in W$.

\item $C_r(w_0,\cdots,w_{n-1})$ implies $C_r(w_{\pi(0)},\cdots,w_{\pi(n-1)})$ for
all $r\in R$, $w_0,\cdots,w_{n-1}\in W$ and permutation $\pi$ of $n$.

\item For all $r\in R$ and for all $w_0, w_1,\cdots w_{n-2}$ in $W$
such that $w_iE'w_j$ whenever $i<j<n-1$, there exists $w_{n-1}\in W$
such that $C_r(w_0,w_1,\cdots w_{n-1})$.

\item For all $k\in \omega$, for all distinct $w, w_0\cdots, w_{k-1}\in W$,
and for any function \par\noindent
${f:S(n,k)\to Cof^+(R)}$,  
there is a $w_k\in W\smallsetminus \{w_0,\cdots, w_{k-1}\}$
such that
$w_kEw$, i.e. $w_k$ is in the same block as $w$,  and
$$\bigwedge \{D(w_{i_0}, w_{i_1},\cdots,  w)
\implies \eta(f(i))[w_{i_0}, w_{i_1},\cdots,  w_{i_{n-1}}]: i\in S(n,k)\}.$$ 
\item The $C_r$'s are pairwise disjoint.
\item $C_{r_1};C_{r_2}=C_{r_1+r_2}$
\end{enumroman}
\end{theorem}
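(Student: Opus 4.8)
The plan is to construct $W$, $E$, and the relations $C_r$ as a Fra\"iss\'e-style limit (or a direct transfinite construction) built to satisfy the five/six closure conditions, exploiting that $(R,+)$ is uncountable so that we have enough ``room'' to realize the extension requirements (iii) and (iv). First I would fix $W_0$ to be a disjoint union of $n$ blocks $B_0,\dots,B_{n-1}$, each of size $|R|$, and let $E$ be the equivalence relation whose classes are exactly the $B_k$; condition (i) will then be a definitional constraint, namely we only ever put a tuple $(w_0,\dots,w_{n-1})$ into some $C_r$ when the $w_i$ lie in pairwise distinct blocks, and (ii) we build in by always closing under all permutations simultaneously (this is consistent with (i) since $D_E$ is permutation-invariant). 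The disjointness condition (v) is maintained as an invariant: whenever we are forced to add a tuple to some $C_r$, we check it has not already been placed in any $C_{r'}$ with $r'\neq r$; the extension steps will always have freedom to pick a fresh witness element avoiding the finitely (or boundedly) many ``bad'' choices.

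Next I would handle the two existential demands by a standard back-and-forth bookkeeping over a well-ordering of all ``tasks''. A task of type (iii) asks, given $r\in R$ and $w_0,\dots,w_{n-2}$ in pairwise distinct blocks (so they occupy $n-1$ of the $n$ blocks, leaving exactly one block $B_m$ free), to find $w_{n-1}\in B_m$ with $C_r(w_0,\dots,w_{n-1})$; we simply adjoin a brand-new point to $B_m$ (or reuse an unused one) and declare the tuple and all its permutations to be in $C_r$, checking the disjointness invariant. A task of type (iv) is the delicate one: given distinct $w,w_0,\dots,w_{k-1}$ and $f:S(n,k)\to Cof^+(R)$, we must produce $w_k$ in the same block as $w$, distinct from the $w_i$, such that for every $i\in S(n,k)$ for which the relevant points $w_{i_0},\dots,w_{i_{n-1}}$ (with the last coordinate being $w_k$) sit in distinct blocks, the tuple satisfies $\eta(f(i))$ --- i.e.\ it lies in $C_r$ for some $r\in f(i)$. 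Here we take $w_k$ to be a fresh point of $B$ (the block of $w$), and for each such $i$ we must choose an appropriate $r_i\in f(i)$ and put the corresponding tuple into $C_{r_i}$; since each $f(i)$ is finite-or-cofinite and nonempty and $R$ is infinite, while the constraints from disjointness and from condition (vi) rule out only finitely many values at each step, a suitable $r_i$ always exists. Finally I would run the whole construction with $|R|^{+}\cdot\omega$-many stages (or simply $|R|$ stages if $|R|$ is regular) so that every task over the final structure has been addressed, and take unions; the limit $W$, $E$, $(C_r)_{r\in R}$ then satisfy (i)--(v).

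For condition (vi), $C_{r_1};C_{r_2}=C_{r_1+r_2}$, I would not leave it to chance but build it in as a defining clause of the construction: read $C_r$ as the $n$-ary analogue of a group-coset relation. Concretely, equip each block $B_k$ with a free transitive $R$-action (identify $B_k$ with $R$), and when we are forced by an extension step to place a tuple $(w_0,\dots,w_{n-1})$ into some $C_r$, we are really only free to choose the ``value'' in one coordinate --- the other $n-1$ being already pinned down --- and we choose it so that the tuple's ``coordinates sum to $r$'' in the appropriate sense; then $C_r$ becomes $\{\,\bar w : D_E(\bar w)\ \&\ \sum_i \theta(w_i)=r\,\}$ for a suitable cocycle-type function $\theta$, which makes (vi) an identity about $R$ and also makes the $C_r$ automatically pairwise disjoint, yielding (v) for free. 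One must check this bookkeeping is consistent with the freedom needed in steps (iii) and (iv): in (iii) the free coordinate ranges over all of a block, so any target $r$ is hit; in (iv) the set of admissible $r$ for each $i$ is a full coset-translate, which meets the nonempty finite-or-cofinite set $f(i)$ (a cofinite set meets everything, and for a finite $f(i)$ we instead use the cofinitely-many choices of $w_k$ to land inside it).

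\textbf{Main obstacle.} The hard part will be condition (iv) in combination with (v)/(vi): a single new point $w_k$ must simultaneously satisfy $|S(n,k)|$-many relational demands, each of which forces $w_k$ into prescribed cosets with respect to the already-placed points, while no two of these forced placements may collide under disjointness and all must be compatible with the additive law (vi). Verifying that the constraint system is always solvable --- essentially an argument that finitely many coset conditions in an uncountable group are jointly consistent, because a cofinite $f(i)$ imposes no real restriction and a finite $f(i)$ only rules out finitely many witnesses --- is where the uncountability of $R$ is genuinely used and is the crux of the proof.
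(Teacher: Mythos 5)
Your construction of $W$, $E$ and the $C_r$ for conditions (i)--(v) is essentially the proof in the paper: a transfinite bookkeeping over all tasks of type (iv) (type (iii) being a special case of (iv)), a fresh witness $w_l$ adjoined at each stage, each newly created tuple placed in exactly one $C_r$ with $r$ drawn from the relevant $f(i)$, closure under permutations, and unions at the end. The paper makes the choice of $r$ deterministic by fixing a well-ordering $\prec$ of $R$ and always taking the $\prec$-least element of $f(i)$; this is what makes the disjointness verification (v) clean, since a given tuple is only ever assigned to a single $C_r$ and never reassigned. Up to that bookkeeping detail, you have reconstructed the argument.

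The genuine gap is in your treatment of (vi). Defining $C_r=\{\bar w: D_E(\bar w)\ \&\ \sum_i\theta(w_i)=r\}$ via a free transitive $R$-action on each block makes membership of a tuple in $C_r$ a \emph{function} of the tuple, and this is incompatible with the saturation condition (iv). In a task of type (iv) the single new witness $w_k$ must simultaneously satisfy the prescriptions $\eta(f(i))$ for all $i\in S(n,k)$ whose blocks are distinct; under your definition, each such $i$ with $f(i)$ \emph{finite} confines $\theta(w_k)$ to the finite translate $f(i)-\sigma_i$ (where $\sigma_i$ is the sum of the already-placed coordinates), and two such translates for distinct $i$ can be disjoint, leaving no admissible $w_k$ at all. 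Your parenthetical remark that for finite $f(i)$ one uses ``the cofinitely-many choices of $w_k$ to land inside it'' has the quantifiers backwards: under a free transitive action only \emph{finitely} many $w_k$ land a fixed sum inside a finite target. So the coset mechanism buys you (v) and (vi) at the cost of destroying (iv), which is precisely the condition the whole construction exists to secure. For what it is worth, the paper's own proof only verifies conditions (i)--(v) and is silent on (vi) (which, as stated for $n$-ary relations, is not even fully defined); so the honest position is that (vi) is not established by the step-by-step construction either, and your instinct that it requires an additional algebraic mechanism is right --- but the mechanism you propose does not cohere with (iv) and a different resolution would be needed.
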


\begin{demo}{Proof}
We shall construct the structure $\langle W,C_r\rangle_{r\in R}$ 
by a routine step by step fashion.
We note that condition (iii) follows from (iv). Often, however, we will only need 
(and refer to) the weaker condition (iii), hence the redundancy in the formulation of Lemma 1. 
Let $I(^k(|R|))$ be the set of all injections, i.e. one to one functions 
from $k$ to $|R|$. Let
$$Q=\cup \{I(^k(|R|))\times ^{S(n,k)}Cof^{+}(R):k<\omega\}.$$   
Then $|Q|=|R|=\mu$, say. Roughly $Q$ stands for the set of all tasks 
that we have to exhaust.
We will construct a set $W$ with cardinality $\mu$.
$Q$ is intended to represent all the instances of condition (iv) as follows: 
An element of $Q$ is of the form $\langle \alpha_0,\cdots,\alpha_{k-1},f\rangle$, 
where $\alpha_0,\cdots,\alpha_{k-1}<\mu$ and $f:S(n,k)\to Cof^+(R)$.
This represents the instance of (iv) where we take 
$k,w_{\alpha_0},\cdots,w_{\alpha_{k-1}},f$ 
as the {\it concrete} values of the quantified items in (iv). 
Let $\rho$ be an enumeration of $Q$ such that: for all $l<\mu$, for all $q\in Q$, 
there exists $j$, with $l<j<\mu$, such that $\rho(j)=q$.
Such a $\rho$ clearly exists. Fix a well ordering $\prec$ of $R$. 
Let $l<\mu$, and suppose that for all $i<l$ we have already defined the
element $w_i$, and the $n$-ary relation $C_r^i\subseteq {}^nW_i$, where
$W_i=\{w_k:k<i\}$, and $C_r^i$ and $W_i$ satisfy all the conditions
with the possible exception of (iv). 
In the $l$'th step we will make the $\rho(l)$'th instance of (iv) true.
Assume that $\rho(l)=\langle \alpha_0,\cdots,\alpha_{k-1},f\rangle$.
Then $\alpha_0,\cdots,\alpha_{k-1}<\mu$ and 
$f:S(n,k)\rightarrow Cof^{+}R$, for some $k$. ($k=0$ is allowed, too). 
Let $w_l$ be an element not in $\cup \{W_i:i<l\}$.
If there exists $i<k$ such that $l\leq \alpha_i$, then for all 
$r\in R$, we define $C_r^l=\cup \{C_r^i:i<l\}$. Else, $l>\alpha_i$ for all $i<k$. In this case, let 
$v_0=w_{\alpha_0},\cdots,\ v_{k-1}=w_{\alpha_{k-1}}$ and $v_k=w_l$.
For all $r\in R$ we define:
\begin{eqnarray*}X_r^l=\{\langle v_{i_0},\cdots,v_{i_{n-1}}\rangle&:& i_0\leq \cdots\leq i_{n-1}=k\,
\\&&\text{and $r$ is the $\prec$-least element of }f(\langle i_0,\cdots,i_{n-1}\rangle )\}\end{eqnarray*}
and
$$C_r^l=\cup \{C_r^i\cup \{\langle s_{\pi (0)},\cdots,s_{\pi (n-1)}\rangle :s\in X_r^l, \pi 
\text{ is a permutation of } n\}\,:\, i<l\}.$$
Finally we set
$$W=\cup \{W_l:l<\mu\}\quad\text{and } C_r=\cup \{C_r^l:l<\mu\}.$$ 
Now we are going to check that the structure $\langle W,C_r\rangle_{r\in R}$ , so defined, 
satisfies conditions (i)-(v). To this end, for any $l<\mu$ and $r\in R$ let
$$Y_r^l=\{\langle s_{\pi(0)},\cdots,s_{\pi(n-1}\rangle:s\in X_r^l\text { and }\pi \text 
{ is a permutation on } n\}$$
and $$D_r^l=\cup \{C_r^j:j<l\}.$$
Then for all $r\in R$ we have $C_r^l=Y_r^l\cup D_r^l$.
Also, the following are not difficult to check:

(1) $w_l\in Rgs$ (the range of $s$) if $s\in Y_r^l$, and $w_l\notin Rgs$ if $s\in D_r^l$.

(2) $C_r^j\subseteq C_r^m$ if $j\leq m<\mu$.

It is easy to show by induction on $l<\mu$ that for all $r\in R$, $C_r^l$ is symmetric,
and if $s\in C_r^l$ then $s$ satisfies $D_E$, i.e. $s(i)$ and $s(j)$ are in distinct blocks
for $0\leq i<j<n.$ 
Thus $C_r$ satrisfies (i) and is symmetric and so $C_r$ satisfies (ii), too.
Now let $r,p\in R$ be distinct. We want to show by induction on $l<\mu$ 
that $C_r^l$ and $C_p^l$ are disjoint.
Now $D_r^l$ and $D_p^l$ are disjoint by the induction hypothesis and by (2).
By (1), it is therefore, enough to show that $Y_r^l$ and $Y_p^l$ are disjoint.
Assume $s\in Y_r^l$, and let 
$\rho(l)=\langle \alpha_0,\cdots,\alpha_{k-1},f\rangle$ and $v=\langle v_0,\cdots,v_k\rangle 
=\langle w_{\alpha_0},\cdots,w_{\alpha_{k-1}}, w_l\rangle$.
Then $v$ is one to one, since the $w_{\alpha_j}$'s are pairwise distinct
and $w_l\neq w_{\alpha_j}$ for all $j<k$, by its very choice.
It follows thus that there are  a unique $i\in S(n,k)$ and 
permutation $\pi$ of $n$ such that $s=\langle z_{\pi(0)},\cdots,z_{\pi(n-1)}\rangle$, 
where $z=\langle v_{i_0},\cdots,v_{i_{n-1}}\rangle\in X_r^l$.
Thus $r$ is the $\prec$- least element of $f(i)$, by $z\in X_r^l$.
Since  $p\neq r$,  we get that $z\notin X_p^l$, and so $z\notin Y_p^l$.
We have shown that $C_r^l$ and $C_p^l$ are disjoint.
By (2) the $C_r$'s are pairwise disjoint, i.e. condition (v) holds.
Finally we check condition(s) (iv) (and (iii)):
Let $k<\omega$, $w_{\alpha_0},\cdots,w_{\alpha_{k-1}}\in W$ be distinct and let
$w\in W$. Let $f:S(n,k)\to Cof^+R$.
Let $l<\mu$ be such that $\rho(l)=\langle \alpha_0,\cdots,\alpha_{k-1},f\rangle$ 
and $l>\alpha_0,\cdots,l>\alpha_{k-1}$.
Such an $l$ exists by the properties of $\rho$.
Then it is not difficult to check that we constructed the $w_l$ so that
it satisfies $\phi$
$$=\bigwedge \{D(w_{\alpha_{i_0}}, w_{\alpha_{i_1}}\cdots, w_l)\implies \eta(f(i))
(w_{\alpha_{i_0}},w_{\alpha_{i_1}},\cdots,w_{\alpha_{i_{n-2}}},x_{i_{n-1}}): 
i\in S(n,k)\}$$ in $\langle W_l,C_r^l\rangle_{r\in R}.$
By $C_r^l={}^nW_l\cap C_r$ we get that $\phi$ is satisfied
in $\langle W,C_r\rangle_{r\in R}$, as well. 
By this the proof of Lemma 1 is complete.
\end{demo}
\noindent

Notice  that by the construction of $W$, $|W|= |R|$. 
In particular, $W$ is also an uncountable set.
We have excluded the empty set from $Cof^{+} R$ in order that (iv) can be satisfied, 
because $\eta(\emptyset)$ is false for any relations $C_r$.
Notice that condition (iv) in Lemma 1, is a ``saturation condition" 
on $W$. It will be used in the proof of fact 3.1  below, to show that the 
structure $\langle W, C_r\rangle_{r\in R}$
admits elimination of quantifiers in a rather strong sense.
The saturation condition (iv) in words. If we have $k$ distinct elements
of $W$ , then for any block, say $W_i$, of $E$,  and for any {\it prescription}, 
there is an element of this block $W_i$ 
satifying this prescription. A prescription is the following:
Given any $n-1$ elements of the pre-selected 
$k$ elements, if these are in distinct blocks
from each other and from $W_i$
then one of $C_r: r\in X$
holds for them, or none of $C_r:$
$r\in X$ hold for them, where
$X$ is a finite subset of $R$.

Let $U=R\times n$. Let $p(u,r)=\{((s_i, u_i): i<n) \in {}^nU: (s_0\ldots s_{n-1})\in C_r\}$ and let 
$1_u=p(u,T)$.

Let $$\A(n)=\Sg^{\C}\{p(u,r): r\in R\}.$$
Let $1_u=E(u,T)$. For $u\in {}V$, let $\A_u$ denote the relativisation of $\A$ to $1_u$
i.e $$\A_u=\{x\in A: x\leq 1_u\}.$$ $\A_u$ is a boolean algebra.
Also  $\A_u$ is uncountable for every $u\in V$
Define a map $f: \Bl\A\to \P=\prod_{u\in {}V}\A_u$, by
$$f(a)=\langle a\cdot 1_u\rangle_{u\in{}V}.$$
Now each $\A_{u}\cong Cof(R)$ and hence is atomic. Also clearly the $\prod A_u$ is also atomic, its atoms are
$(s_i: i< V)$ such that $s_i\neq 0$ for all except some $j$ where $s_j$ is an atom of $A_j$.

Let $u_0,u_1\in S_3$ be distinct and $u_2=u_1\circ u_0$. Let $J=\{u_0, u_1, s_{[i,j]}u_3, i,j< n\}$. 
Take $\B=\prod_{u=u_0, u_1}{A_u}\times B_{s_{[i,j]}u_2}\times_{u\notin J} A_u$ 
where $\B_v$ is the algebra $Cof(\N)$, for $N$ is an elementary subgroup of $R$.
It is easy to show we expand the language of boolean algebras with constants $1_u: u\in V$ and $d_{i,j}$,
The algebra $\A$ becomes first order interpretable with a one dimensional quantifier free interpretation in $\P$,
and under this interpretaion $\B$ becomes a polyadic equality algebra elementary equivalent to 
$\A(n)$ but is not a neat reduct; we denote it by $\B(n)$.

Now we play a game: we devise a game between $\forall$ (male) and $\exists$(female).
We imagine that $\forall$ wants
to prove that $\A(n)$ is different from $\B(n)$ while $\exists$ tries to show that
$\A(n)$ is the same as $\B(n)$. So their conversation has the form of a game.
Player $\forall$ wins if he manages to find a difference between $\A(n)$ and $\B(n)$
before the play is over; otherwise $\exists$ wins. The game is played in $\mu\leq \omega$
steps. At the $i$th step of a play, player $\forall$ takes one of the structures $\A(n)$, $\B(n)$ 
and 
chooses an atom of this structure; then $\exists$ chooses 
an atom of the other structure. So between them they choose an atom $a_i$ of $\A(n)$ 
and an atom 
$b_i$ of $\B(n)$. Apart from the fact that player $\exists$ 
must choose from the other structure from player $\forall$ at each step, 
both players 
have complete freedom to choose as they please; 
in particular, either player can choose an element which was chosen at an earlier step. 
Player $\exists$ is allowed to see and remember all previous moves in the play.
(As the game theorists would say, this is a game of perfect information.) At the end of 
the play  sequences $\bar{a}=(a_i:i<\mu)$ and $\bar{b}=(b_i: i<\mu)$ have been chosen.
The pair $(\bar{a}, \bar{b})$ is known as the play.
We count the play $(\bar{a}, \bar{b})$ as a win for player $\exists$, 
and we say that $\exists$ wins the play, if there is an isomorphism
$f:\Sg^{\A(n)}ran({\bar{a}})\to \Sg^{\B(n)}ran({\bar {b}})$ such that $f\bar{a}=\bar{b}.$
Let us denote this game by $EF_{\mu}(\A(n),\B(n)).$ (It is an instance of an 
Ehrenfeuch-Fraisse game.)
The more $\A(n)$ is like $\B(n)$ , the better chance player $\exists$ 
has of wining these games. 
For example if player $\exists$ knows about an isomorphism $i:\A(n)\to \B(n)$ 
then she can be sure 
of winning every time. All she has to do to follow the rule is:
Choose $i(a)$ whenever player $\forall$ has 
just chosen an element $a$ of $\A(n)$ and $i^{-1}(b)$ 
whenever player $\forall$ has just chosen 
$b$ from $\B(n)$.  
A strategy for a player in a game is a set of rules which tell 
the player exactly how to move, depending on what has happened earlier in the play.
We say that the player uses the strategy $\sigma$ 
in a play if each of his or her moves obeys the rules of $\sigma$.
We say that $\sigma$ is a winning strategy if 
the player wins every play in which he or she uses $\sigma$.
The game generalizes verbatim to atomic 
boolean algebras with operators.

\begin{definition}
Two atomic structures $\A$ and $\B$ are back and forth equivalent 
if $\exists$ has a winning strategy 
for the game
$EFA_{\omega}(\A,\B)$.  
\end{definition}
Let $At\D$ denotes the set of atoms of $\D$.
There is a useful criterion for two structures to be back and forth equivalent.

\begin{definition}
A back and forth system from $\A$ to $\B$ is a set $I$ of pairs
$(\bar{a}, \bar{b})$ of tuples $\bar{a}$ from $At\A$ and $\bar{b}$ from $At\B$, such that
\begin{enumroman}

\item If $(\bar{a}, \bar{b})$ is in $I$, then $\bar{a}$ 
and $\bar{b}$ have the same length and $(\A,\bar{a})$ and $(\B,\bar{b})$
satisfies the same quantifier free formulas.

\item $I$ is not empty.

\item For every pair $(\bar{a},\bar{b})$ in $I$ and every atom $c$ of $\A$ there is an atom  
$d$ of $\B$ such that $(\bar{a}c, b\bar{d})$ is in $I$ and

\item For every pair $(\bar{a},\bar{b})$ in $I$ and every atom $d$ of $\B$ there is an atom  
$c$ of $A$ such that $(\bar{a}c, b\bar{d})$ is in $I.$ 

\end{enumroman}
\end{definition}
Note that by (i) if $\bar{a}$ and $\bar{b}$ is in $I$ 
then there is an isomorphism $f:\Sg(ran\bar{a})\to \Sg(ran\bar{b})$ such that 
$f(\bar{a})=\bar{b}.$

We write $I^*$ for the set of all such functions corresponding to pairs of tuples of atoms 
in $I$.  The above conditions imply the following
for $J=I^*$.
\begin{enumroman}

\item each $f\in J$ is an isomorphism from a finitely generated 
substructure of $\A$ to a finitely generated substructure of $\B.$

\item $J$ is non empty

\item for every $f\in J$ and $c\in At\A$ there is 
$g\supseteq f$ such that $g\in J$ and $c\in dom(g)$

\item for every $f\in J$ and $d\in At\B$ there is 
$g\supseteq f$ such that $g\in J$ and $d\in ran(g)$

\end{enumroman}
And conversely, it is not hard to see, that  if $J$ 
is any set satisfying then there is a back and forth system $I$ such that $J=I^*.$
The following Theorem is intuitive.

\begin{theorem}  $\A$ and $\B$ are back-and forth equivalent 
if and only if there is a back and forth system from $\A$
to $\B$.
\end{theorem}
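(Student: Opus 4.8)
The plan is to prove the two implications separately, in each case translating between positions of the game $EFA_{\omega}(\A,\B)$ and members of a back-and-forth system. For the direction from a system to back-and-forth equivalence, suppose $I$ is a back-and-forth system from $\A$ to $\B$; I will describe a winning strategy for $\exists$. She plays so as to maintain the invariant that after each round the pair $(\bar a,\bar b)$ of atoms chosen so far lies in $I$. This is possible: assuming, as we may, that the pair of empty tuples belongs to $I$ (it corresponds under $I\mapsto I^{*}$ to the canonical isomorphism $\Sg^{\A}(\emptyset)\to\Sg^{\B}(\emptyset)$, these subalgebras being identified by the restriction of any isomorphism witnessing clause (i) for a pair in $I$), the invariant holds before round $1$; and if it holds after round $k$ with position $(\bar a,\bar b)\in I$ and $\forall$ next picks an atom $c$ of $\A$, then clause (iii) supplies an atom $d$ of $\B$ with $(\bar ac,\bar bd)\in I$, which $\exists$ plays, while if $\forall$ picks an atom $d$ of $\B$, clause (iv) supplies a suitable $c$. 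Note that if $\forall$ repeats an earlier atom, say $c=a_{i}$, then $(\bar ac)$ satisfies $x_{i}=x_{|\bar a|}$, so by clause (i) the response is forced to be $b_{i}$; hence $\exists$'s answers are automatically consistent. At the end of such a play we have sequences $\bar a=(a_{i}:i<\omega)$ and $\bar b=(b_{i}:i<\omega)$ every finite initial segment of which lies in $I$. By clause (i) and the observation recorded immediately after the definition of a back-and-forth system, for each $k<\omega$ there is an isomorphism $f_{k}:\Sg^{\A}(ran(a_{i}:i<k))\to\Sg^{\B}(ran(b_{i}:i<k))$ with $f_{k}(a_{i})=b_{i}$ for all $i<k$; since a homomorphism is determined by its values on generators, $f_{k+1}$ extends $f_{k}$, and because each fundamental operation has finite rank we have $\Sg^{\A}(ran\,\bar a)=\bigcup_{k<\omega}\Sg^{\A}(ran(a_{i}:i<k))$ and likewise on the $\B$ side. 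Hence $f=\bigcup_{k<\omega}f_{k}$ is a well-defined isomorphism $\Sg^{\A}(ran\,\bar a)\to\Sg^{\B}(ran\,\bar b)$ with $f\bar a=\bar b$, so $\exists$ wins, and this is a winning strategy.

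For the converse, suppose $\exists$ has a winning strategy $\sigma$ in $EFA_{\omega}(\A,\B)$, and let $I$ be the set of all pairs $(\bar a,\bar b)$ of tuples of atoms occurring as the position after finitely many rounds of some play in which $\exists$ follows $\sigma$ (including the pair of empty tuples). Clause (ii) is immediate. For clause (iii), given $(\bar a,\bar b)\in I$ realised after $k$ rounds of a $\sigma$-play and any atom $c$ of $\A$, let $\forall$ choose $c$ from $\A$ in round $k+1$; then $\sigma$ prescribes an atom $d$ of $\B$, and $(\bar ac,\bar bd)\in I$; clause (iv) is symmetric. For clause (i), given $(\bar a,\bar b)\in I$ realised after $k$ rounds, continue that $\sigma$-play arbitrarily through all $\omega$ rounds to obtain extended sequences $\bar a^{*}\supseteq\bar a$, $\bar b^{*}\supseteq\bar b$; since $\sigma$ is winning there is an isomorphism $f:\Sg^{\A}(ran\,\bar a^{*})\to\Sg^{\B}(ran\,\bar b^{*})$ with $f\bar a^{*}=\bar b^{*}$, and its restriction to $\Sg^{\A}(ran\,\bar a)$ is an isomorphism onto $\Sg^{\B}(ran\,\bar b)$ carrying $\bar a$ to $\bar b$, so $(\A,\bar a)$ and $(\B,\bar b)$ satisfy the same quantifier-free formulas. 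Thus $I$ is a back-and-forth system from $\A$ to $\B$, completing the proof.

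The verifications of clauses (i)--(iv) in both directions are routine; the only points that need care are the passage to the limit in the first implication — checking that the finite partial isomorphisms cohere into a single isomorphism of the generated subalgebras sending $\bar a$ to $\bar b$ — together with the dual subtlety in the second implication that, because the winning condition of the game is tested only at stage $\omega$, a position reached after finitely many rounds must first be completed to a full play before the winning isomorphism can be invoked and then cut down. A minor bookkeeping nuisance, already flagged above, is the treatment of the empty position, which is why it is convenient to assume the empty pair sits in $I$.
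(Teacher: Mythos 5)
Your proof is correct and follows essentially the same route as the paper: one direction takes $I$ to be the set of finite positions arising in plays where $\exists$ follows her winning strategy, and the other turns the system into a strategy preserving the invariant that the current position belongs to $I$ (the paper phrases this via the derived family $I^{*}$ of partial isomorphisms and a well-ordering, but the content is the same). The only soft spot is your claim that we may ``assume'' the empty pair lies in $I$: clause (ii) gives only nonemptiness, and adjoining the empty pair need not preserve clauses (iii)--(iv), so the justification you offer (which addresses only clause (i)) is incomplete; the standard repair is to anchor the invariant at an arbitrary fixed pair of $I$ --- or, as the paper does, to maintain instead that the position is contained in the graph of some member of $I^{*}$ --- after which the final isomorphism restricts to the subalgebras generated by the atoms actually played.
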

\begin{demo}{Proof}Suppose that $\A$ is back and forth equivalent to $\B$, so 
that player $\exists$ has a winning strategy $\sigma$
for the game $EF_{\omega}(\A,\B)$. Then define
$I$ to consist of all pairs of tuples of atoms which are of the form 
$(\bar{c}\upharpoonright n, \bar{d}\upharpoonright n)$ for some $n<\omega$
and some paly $(\bar{}c, \bar{d})$ in which $\exists$ uses $\sigma$.
The set $I$ is a back and forth system from $\A$ to $\B$.
First putting $n=0$ in the definition of $I$ , we see that $I$ contains the pair of $0$ tuples
$(\langle \rangle, \langle \rangle).$ 
This establishes (ii). Next (iii) and (iv) express that $\sigma$ 
tells player $\exists$ what to do at each step of this game. 
And finally $(i)$ holds because the strategy of $\sigma$ is winning. 
In the other direction, suppose that there exists a back 
and forth system $I$ from $\A$ to $\B$.
Define the set $I^*$ of maps as above, and choose an arbitrary well ordering of $I^*$. 
Consider the following
strategy $\sigma$ for player $\exists$ in the game $EF_{\omega}(\A,\B)$.
At each step if the play is so far $(\bar{a}, \bar{b})$ and 
$\forall$ has just chosen an element $c$ from $\A$, find the first map
$f$ in $I^*$ such that $\bar{a}$ and $c$ 
are in the domain of $f$ and $f(\bar{a})=f(\bar{b})$
and then choose $d$ to be $fc$, likewise in the other direction.
\end{demo}
This strategy makes $\exists$ win.
Coming back to our algebras we have:

\begin{theorem} 
\begin{enumroman}
\item $\exists$ has a winning strategy in $EEF_{\omega}(\A(n),\B(n))$.
\item $\A(n)\equiv_{\infty, \omega} \B(n).$
\end{enumroman}
\end{theorem}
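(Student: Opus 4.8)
The plan is to derive both clauses at once from the existence of a single back-and-forth system. By the characterization established just above, player $\exists$ has a winning strategy in $EF_\omega(\A(n),\B(n))$ exactly when there is a back-and-forth system $I$ from $\A(n)$ to $\B(n)$ in the sense of the definition given; and once such an $I$ is at hand, the associated family $I^{*}$ is a nonempty set of isomorphisms between finitely generated subalgebras with the back-and-forth extension property over atoms, which is precisely the kind of data that transfers from an atomic structure to its $L_{\infty,\omega}$-theory (the standard atom-structure-to-algebra transfer, Karp-style). So everything reduces to constructing $I$.

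First I would pin down the atoms of the two algebras. Recall that $\A(n)$ sits inside $\P=\prod_{u\in V}\A_u$ via $a\mapsto\langle a\cdot 1_u\rangle_{u\in V}$, with every component $\A_u\cong Cof(R)$, and that $\B(n)$ is the image, under the same one-dimensional quantifier-free interpretation, of the product in which finitely many components have been replaced by copies of $Cof(N)$, $N\prec R$ a proper elementary subgroup, while all remaining components are literally unchanged. Both algebras are atomic, with the same number of atoms (only finitely many coordinates are altered), and the quantifier-free type in $\A(n)$ of a finite tuple of atoms is governed by only finite data: the pattern of coincidences of underlying coordinates and $E$-blocks, together with the finite-or-cofinite $C_r$-incidence sets carried by the atoms. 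This is exactly what the quantifier-elimination property of $\langle W,C_r\rangle_{r\in R}$ (the mechanism behind Fact 3.1) records, and the analogous description applies to $\B(n)$ with $N$ in place of $R$ on the altered coordinates.

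Now I would let $I$ consist of all pairs $(\bar a,\bar b)$ of equal-length tuples of atoms, $\bar a$ from $\A(n)$ and $\bar b$ from $\B(n)$, whose finite combinatorial data match, where on the altered coordinates the matching is read off through a fixed back-and-forth correspondence between $Cof(R)$ and $Cof(N)$. Then condition (i) of a back-and-forth system holds by the previous paragraph, and $I$ contains the pair of empty tuples, so it is nonempty. For the forth step, given $(\bar a,\bar b)\in I$ and an atom $c$ of $\A(n)$: if $c$ is supported on an unchanged coordinate, its evident counterpart in $\B(n)$ serves as $d$; if $c$ is supported on an altered coordinate, the data that $d$ must realise over $\bar b$ is precisely a ``prescription'' of the kind appearing in condition (iv) above — a choice, for each relevant earlier tuple, of a nonempty finite or cofinite subset of the group to be hit or avoided — and since $N$ is infinite the $N$-version of the structure still satisfies (iv), so the required $d$ exists. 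The back step is symmetric, using condition (iv) for $\langle W,C_r\rangle_{r\in R}$ together with the fact that an $N$-prescription lifts, via $N\prec R$, to an $R$-prescription realising the same type. This yields $I$, hence (i); and the transfer of $I^{*}$ to the full atomic algebras yields $\A(n)\equiv_{\infty,\omega}\B(n)$, which is (ii).

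The step I expect to be the main obstacle is the one in the middle: showing rigorously that the quantifier-free type of a tuple of atoms of $\A(n)$ — after relativising to the $1_u$'s, forming $\Sg^{\C}\{p(u,r):r\in R\}$, and interpreting $\A(n)$ inside $\P$ — is captured by exactly this finite block-plus-incidence data, so that matching it gives a genuine partial isomorphism and so that condition (iv) is precisely what powers the two extension steps. This is where $N\prec R$, rather than merely $N$ infinite, is essential: the finitely many replaced components must be elementarily indistinguishable from the rest at the level of the finite quantifier-free types that atoms of these algebras can detect, and it is exactly this indistinguishability, fed into the saturation condition (iv), that lets $\exists$ survive every round and upgrades plain elementary equivalence of $\A(n)$ and $\B(n)$ to $L_{\infty,\omega}$-equivalence.
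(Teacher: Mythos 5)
Your overall architecture coincides with the paper's: both clauses are reduced to exhibiting a single back-and-forth system of finite partial isomorphisms between the atoms of $\A(n)$ and $\B(n)$, clause (i) then following from the characterization proved just before the theorem. You diverge in the two surrounding steps. For the passage from (i) to (ii) you cite Karp's theorem; the paper instead proves the transfer by hand, passing to a Boolean-valued extension in which the cardinalities of $\A(n)$ and $\B(n)$ collapse to $\omega$, extracting an isomorphism there from the back-and-forth system, and pulling $L_{\infty,\omega}$-equivalence back by absoluteness of $\models$. Both routes are legitimate; yours is shorter, the paper's is self-contained. For the construction of the system itself the paper's strategy is much more economical than yours: $\exists$ copies $\forall$'s atom verbatim whenever it lies below a $1_u$ in an unaltered component, and otherwise answers with a fresh atom below the same $1_u$ (chosen compatibly with the substitution operations, which merely permute components), using only that both algebras have infinitely many atoms below each such $1_u$.

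Two points in your write-up need repair. First, $\A(n)$ and $\B(n)$ do not have ``the same number of atoms'': below the altered $1_u$'s the former has uncountably many atoms and the latter countably many, and this cardinality mismatch is the whole point of the construction; what saves the game is only that both cardinalities are infinite. Second, condition (iv) is a saturation property of the first-order structure $\langle W, C_r\rangle_{r\in R}$; the algebra $\B(n)$ is not obtained from an ``$N$-version'' of that structure but by surgery on the product $\P$, replacing finitely many Boolean components $\A_u\cong Cof(R)$ by copies of $Cof(N)$. So the forth and back steps on altered coordinates should be justified directly from the fact that $Cof(N)$ and $Cof(R)$ are atomic with infinitely many interchangeable atoms, together with the observation (which you correctly isolate as the crux, and which the paper also leaves implicit) that the algebraic quantifier-free type of a tuple of atoms records only the components $u$, the equalities, and the substitution-images among the atoms played. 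With those corrections your argument closes and is essentially a more elaborate rendering of the paper's.
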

\begin{proof} Both $\A(n)$ and $\B(n)$ are atomic . So $\A(n)$ and $\B(n)$
are identical in all components except for the components ''coloured " 
by $1_{u}$, $u\in T_n=V\sim J$ beneath which $\A(n)$ has 
uncountably many atoms and $\B(n)$ has countably many atoms.
Now for the game. 
At each step, if the play so far $(\bar{a}, \bar{b})$ and $\forall$ chooses an atom $a$ 
in one of the substructures, we have one of two case. 
Either $a.1_u=a$ for some $u\notin T_n$
in which case
$\exists$ chooses the same atom in the other structure. 
Else $a\leq 1_{u}$ for some $u\in T_n.$ 
Then
$\exists$ chooses a new atom below $1_{u}$ 
(distinct from $a$ and all atoms played so far.)
This is possible since there finitely many atoms in 
play and there are infinitely many atoms below
$1_{u}$.
This strategy makes $\exists$ win. 
Let $J$ be a back and forth system which exists by Theorem 6 and (i).   
Order $J$ by reverse inclusion, that is $f\leq g$ 
if $f$ extends $g$. $\leq$ is a partial order on $J$.
For $g\in J$, let $[g]=\{f\in J: f\leq g\}$. Then $\{[g]: g\in J\}$ is the base of a 
topology on 
$J.$ Let $\C$ be the complete  
Boolean algebra of regular open subsets of $J$ with respect to the topology 
defined on $J.$
Form the boolean extension $\M^{\C}.$
We want to define an isomorphism in $\M^{\C}$ of $\breve{\A}$ to 
$\breve{\B}.$
We shall use the following for $s\in \M^{\C}$, (1):
$$||(\exists x\in \breve{s})\phi(x)||=\sum_{a\in s}||\phi(\breve{a})||.$$
Define $G$ by (2):
$$||G(\breve{a},\breve{b})||=\{f\in {J}: f(a)=b\}$$
for $c\in \A$ and $d\in \B$.
If the right-hand side,  is not empty, that is it contains a function $f$, then let
$f_0$ be the restriction of $f$ to the substructure of $\A$ generated by $\{a\}$.
Then $f_0\in J.$ Also $$\{f\in J:  f(c)=d\}=[f_0]\in \C.$$
$G$ is therefore a $\C$-valued relation. Now let $u,v\in \M$.
Then 
$$||\breve{u}=\breve{v}||=1\text { iff }u=v,$$ 
and 
$$||\breve{u}=\breve{v}||=0\text { iff } u\neq v$$
Therefore
$$||G(\breve{a},\breve{b})\land G(\breve{a},\breve{c})||\subseteq ||\breve{b}=\breve{c}||.$$
for $a\in \A$ and $b,c\in \B.$
Therefore ``$G$ is a function." is valid. 
It is one to one because its converse is also a function.
(This can be proved the same way).
Finally we show that  that $\A(n)\equiv_{\infty\omega}\B(n)$ using "soft model theory" as follows:
Form a boolean extension $\M^*$ of $\M$
in which the cardinailities of $\A(n)$ and $\B(n)$ collapse to 
$\omega$.  Then $\A(n)$ and $\B(n)$ are still back and forth equivalent in $\M^*.$
Then $\A(n)\equiv_{\infty\omega}\B(n)$ in $\M^*$, and hence also in $\M$
by absoluteness of $\models$.
\end{proof}


\begin{thebibliography}{100}

\bibitem{ANS} Andr\'eka.H, N\'emeti I, Sayed Ahmed, T 
{\it A non-representable infinite dimensional quasipolyadic equality algebra with a representable cylindric reduct.}
Studia Math Hungarica, in press. 


\bibitem{1} H. Andreka, M. Ferenczi, I. Nemeti (editors) {\it Cylindric-like algebras and algebraic logic} Bolyai Society, Mathematical Studies, Springer
(2013).

\bibitem{ANT} Andr\'eka, H., N\'emeti, I., Sayed Ahmed, T., 
{\it Omitting types for finite variable fragments and complete representations of algebras.}
Journal of Symbolic Logic {\bf 73}(1) (2008) p.65-89

\bibitem{DM} Daigneault, A., and Monk,J.D., 
{\it Representation Theory for Polyadic algebras}. 
Fund. Math. {\bf 52}(1963) p.151-176.

\bibitem{Fer1} Ferenczi, M., {\it On representation of neatly embeddable cylindric algebras}
Journal of Applied Non-classical Logics, {\bf 10}(3-4) (2000)  


\bibitem{R} Robin Hirsch {\it Relation algebra reducts of cylindric algebras and compete representations} Journal of Symbolic Logic (2006).



\bibitem{HMT1}L. Henkin, D. Monk, A. Tarski {\it Cylindric algebras, part 1} 1970

\bibitem{HMT2}L. Henkin, D. Monk, A. Tarski {\it Cylindric algebras, part 2} 1985


\bibitem{HHbook} Hirsch and Hodkinson {\it Relation algebras by Games}  Studies in Logic and the Foundations of mathematics, North Holand, 2002.
\bibitem{Sagi} G. Sagi {\it Polyadic algebras} In \cite{1}p. 376-392

\bibitem{HH} Hirsch, Hodkinson {Completiond and complete representations in algebraic logic} In \cite{1}.

\bibitem{FM} T. Sayed Ahmed {\it The class of 2 dimensional neat reducts of polyadic algebras is not elementary}. 
Fundementa Mathematica (172) (2002) p.61-81

\bibitem{MLQ} T. Sayed Ahmed {\it A model theoretic solution to a problem of Tarski} Mathematical Logic Quarterly (48) (2002)
p.343-355


\bibitem{OTT} Sayed Ahmed T., {\it Completions, complete representations and omiting types} In \cite{1} p. 205-222

\bibitem{Sayed}T. Sayed Ahmed {\it Neat reducts and Neat Embeddings in Cylindric Algebras} in \cite{1},  p. 105-134
  
 

\end{thebibliography}
\end{document}